\newtheorem{theorem}{Theorem}[section]
\newtheorem{prop}[theorem]{Proposition}
\theoremstyle{definition}
\newtheorem{definition}[theorem]{Definition}
\newtheorem{example}[theorem]{Example}
\newtheorem*{question*}{Question}
\theoremstyle{remark}
\newtheorem{remark}[theorem]{Remark}
\newtheorem*{remark*}{Remark}
\numberwithin{equation}{section}
\DeclareMathOperator{\Aut}{Aut}
\DeclareMathOperator{\Ker}{Ker}
\begin{document}

\title{On Cardinalities whose Arithmetical Properties Determine the Structure of Solutions of the Yang--Baxter Equation}

\author{Maria Ferrara}
\address{Maria Ferrara --- Dipartimento di Matematica e Fisica, Università degli Studi della Campania ‘‘Luigi Vanvitelli’’}
\email{maria.ferrara1@unicampania.it}

\author{Marco Trombetti}
\address{Marco Trombetti --- Dipartimento di Matematica e Applicazioni ‘‘Renato Caccioppoli’’, Università degli Studi di Napoli Federico II}
\email{marco.trombetti@unina.it}

\author{Cindy (Sin Yi) Tsang}
\address{Cindy (Sin Yi) Tsang --- Department of Mathematics, Ochanomizu University, 2-1-1 Otsuka, Bunkyo-ku, Tokyo, Japan}
\email{tsang.sin.yi@ocha.ac.jp}
\urladdr{http://sites.google.com/site/cindysinyitsang/}

\subjclass[2020]{16T25, 20F16, 81R50}

\keywords{skew brace; Yang--Baxter equation; multipermutation solution; involutive solution; supersoluble solution}

\begin{abstract} 
The aim of this paper is to provide purely arithmetical characterisations of those natural numbers $n$ for which every non-degenerate set-theoretic solution of cardinality $n$ of the Yang--Baxter equation arising from a skew brace (sb-solution for short) satisfies some relevant properties, such as being a flip or being involutive. For example, it turns out that every sb-solution of cardinality $n$ has finite multipermutation level if and only if its prime factorisation $n= p_1^{\alpha_1} \ldots p_t^{\alpha_t}$ is cube-free, namely $\alpha_i\leq 2$ for every $i$, and $p_i$ does not divide $p_j^{\alpha_j}-1$ for ~\hbox{$i\neq j$}. Two novel constructions of skew braces will play a central role in our proofs.

We shall also introduce the notion of supersoluble solution and show how this concept is related to that of supersoluble skew brace. In doing so, we have spotted an irreparable mistake in the proof of Theorem C [Ballester-Bolinches et al., Adv. Math. 455 (2024)], which characterizes soluble solutions in terms of soluble skew braces. 
\end{abstract}

\maketitle

\section{Introduction}

\noindent The Yang--Baxter equation (YBE, for short) is a consistency equation which was independently obtained by the physicists Yang \cite{Yang} and Baxter \cite{Baxter} in the field of quantum statistical mechanics. It has many relevant interpretations in the realm of mathematical physics, and besides that, it plays a key role in the foundation of quantum groups. Moreover, it provides a multidisciplinary approach for a wide variety of areas such as Hopf algebras, knot theory, and braid theory, among others. 

\smallskip

In this work, we focus on {\it non-degenerate set-theoretic solutions} (solutions, for short) of the YBE, that is, on the pairs $(X,r)$ where $X$ is a set and
\[r: (x,y)\in X\times X\longmapsto (\lambda_x(y),\rho_y(x))\in X\times X\]
is a bijective map for which the equality $r_{12}r_{23}r_{12}=r_{23}r_{12}r_{23}$ holds and the component maps $\lambda_x,\, \rho_x$ are bijective for every $x\in X$ --- here $r_{12}=r\times\operatorname{id}_X$ and $r_{23}=\operatorname{id}_X\times\, r$. Recall that $(X,r)$ is said to be {\it involutive} if $r^2 = \mathrm{id}_{X\times X}$. For every set $X$, there always exists the involutive solution $(X,r)$, where $r$ is defined by $r(x,y)=(y,x)$ --- this is referred to as the {\it flip} solution.

\begin{remark*}
The term ‘‘trivial solution’’ is usually reserved for flip solutions of cardinality at least $2$, while the only solution with one element is said to be a {\it one-element} solution. To simplify our terminology, we use the term “flip solutions” to encompass both trivial and one-element solutions.
\end{remark*}

In this work, we are interested in the solutions $(X,r)$ with a finite underlying set $X$. However, as observed in \cite{numbersolution}, the number of solutions grows very fast in terms of the cardinality. For example, as shown in \cite[Theorems 1.3 and 1.4]{numbersolution}, up to isomorphism, there are:
\begin{enumerate}[$\bullet$]
\item $321,931$ involutive solutions of cardinality $9$;
\item $4,895,272$ involutive solutions of cardinality $10$;
\item $422,449,480$ non-involutive solutions of cardinality $8$.
\end{enumerate}
It is therefore clear that, in order to classify all (finite) solutions of the YBE (this problem is still very far from being solved at the moment), additional restrictions must be imposed. The algebraic structure of skew braces is one of the main tools used to achieve this.

\smallskip

A {\it skew (left) brace} is a set $A$ endowed with two group structures $(A,+)$ and $(A,\circ)$ satisfying the {\it skew left distributivity}: $$a\circ(b+c)=a\circ b-a+a\circ c,$$ for all $a,b,c\in A$. For any group-theoretic property $\mathfrak X$, a skew brace is said to be {\it of $\mathfrak X$-type} if its additive group has property $\mathfrak X$. Thus, skew brace is a generalisation of {\it brace} as introduced by~Rump (see \cite{GV} and \cite{Rump0}), which (in our terminology) is just a skew brace of abelian type. 

\smallskip

In any skew brace $(A,+,\circ)$, it is easy to see that the identity $0$ of $(A,+)$ coincides with that of $(A,\circ)$. Also $(A,\circ)$ acts on $(A,+)$ via the {\it $\lambda$-map}: for every $a\in A$, the map $$\lambda_a:b\in A\longmapsto \lambda_a(b)=-a+a\circ b\in A$$ is an automorphism of $(A,+)$, and the map $$\lambda: a\in (A,\circ)\longmapsto\lambda_a \in \operatorname{Aut}(A,+)$$ is a group homomorphism. The ``distance" between the operations $+$ and $\circ$ is measured by the so-called {\it star product}: $$a\ast b=-a+a\circ b-b = \lambda_a(b) -b,$$ for all $a,b\in A$. In fact $a\circ b = a+b$ if and only if $a\ast b=0$ for all $a,b\in A$, in which case~$A$ is said to be {\it trivial}. Recall also that $A$ is said to be {\it almost trivial} if $a\circ b=b+a$ for all~\hbox{$a,b\in A$.} 

\smallskip

For every (finite) skew brace $(A,+,\circ)$, one can naturally associate to it a (finite) solution $(A,r_A)$ of the YBE defined by
\begin{equation}\label{rA} r_A: (a,b) \in A\times A \longmapsto (\lambda_a(b),\lambda_a(b)^{-1}\circ a \circ b)\in A\times A,\end{equation}
which is involutive if and only if $A$ is a brace (see \cite{GV}, \cite{Rump0}, and also \cite{ChildsYBE}). We shall refer to these solutions that arise from a skew brace as {\it skew-brace-solutions} ({\it sb-solution} for short). Conversely, for every (finite) solution $(X,r)$ of the~YBE, one can associate to it the (not necessarily finite) {\it structure group} of $(X,r)$, defined by the presentation
\[G(X,r)=\langle x\in X\,:\, x \circ y=u\circ v\textnormal{ for }r(x,y)=(u,v)\rangle,\]
on which one can define a group operation $+$ such that $(G(X,r),+,\circ)$ is a skew brace satisfying a certain universal property (see \cite[Theorem 3.5]{SV}).

\smallskip

In contrast to solutions of the YBE, the number of skew braces does not grow as rapidly in terms of the order. For example, by \cite[Tables 5.1 and~5.3]{GV}, up to isomorphism, there are only:
\begin{enumerate}[$\bullet$]
\item $4$ braces of order $9$;
\item $2$ braces of order $10$;
\item $20$ skew braces of order $8$ that are not braces.
\end{enumerate}
It is therefore clear that, when trying to classify all (finite) solutions of the~YBE, we may adjust the level of difficulty by restricting to sb-solutions. For example, this was the approach of \cite{ballester}, where soluble sb-solutions have been characterised in terms of solubility of the associated skew brace, although, as shown in Remark \ref{irreparable}, their approach is not really satisfactory.

\smallskip

As shown in \cite{relation}, the cardinality of a solution may give many information about its properties --- the main theorem states that any indecomposable involutive solution of the YBE of square-free cardinality is a {\it multipermutation} solution. We refer the reader to Section \ref{sec:Prelim} for the terminology, but what is relevant here is that multipermutation solutions have a controllable level of complexity, so knowing that solutions of a certain cardinality are always multipermutation is really a good thing.  

\smallskip

In this paper, we have obtained purely arithmetical characterisations of the natural numbers $n$ for which {\it every} sb-solution of cardinality $n$ satisfies some relevant properties --- we shall consider the properties of being a flip, involutive, multipermutation, and supersoluble. The notion of {\it supersoluble} solution is introduced for the first time in this paper (see Section \ref{sec:Prelim} for the definition) and was inspired by the concept of soluble solution given in \cite{ballester}. 


\smallskip

\noindent{\bf Theorem A}\quad Let $n$ be a natural number, and let $p_1^{\alpha_1}\ldots p_t^{\alpha_t}$ be its prime factorisation. Then the following are equivalent:
\begin{enumerate}[$(1)$]
    \item Every sb-solution of cardinality $n$ is a flip solution.
    \item $\alpha_i=1$ for every $i$, and $p_i$ does not divide $p_j-1$ for $i\neq j$.
\end{enumerate}


\smallskip

\noindent{\bf Theorem B}\quad Let $n$ be a natural number, and let $p_1^{\alpha_1}\ldots p_t^{\alpha_t}$ be its prime factorisation. Then the following are equivalent:
\begin{enumerate}[$(1)$]
    \item Every sb-solution of cardinality $n$ is multipermutation.
    \item Every sb-solution of cardinality $n$ is involutive.
    \item $\alpha_i\leq 2$ for every $i$, and $p_i$ does not divide $p_j^{\alpha_j}-1$ for $i\neq j$.
\end{enumerate}

\eject

\noindent{\bf Theorem C}\quad Let $n$ be a natural number, and let $p_1^{\alpha_1}\ldots p_t^{\alpha_t}$ be its prime factorisation. Suppose that $n$ satisfies the following conditions: 
    \begin{enumerate}[$\bullet$]
    \item $\alpha_i\leq 2$ for every $i$;
    \item If $\alpha_j=2$, then $p_i$ does not divide $p_j^2-1$ for $i\neq j$; 
    \item If $4$ divides $n$, then $p_i\equiv1\pmod{4}$ for every $i$ with $\alpha_i=2$. 
\end{enumerate}
\noindent Then every sb-solution of cardinality~$n$ is supersoluble.
\smallskip

The difference in the statements of Theorems A, B and Theorem C comes from the following fact. The most relevant result of \cite{ballester} is a characterisation of soluble solutions in terms of soluble skew braces. However, as we shall soon see in~Re\-mark \ref{irreparable}, the proof of this result contains an irreparable gap and it is actually very unlikely that such a characterisation can be achieved without the addition of very strong non-solution-theoretic conditions.

Our three main theorems will be obtained as corollaries of more general results on arithmetical characterisations of the natural numbers $n$ for which every skew brace of order $n$ satisfies a certain algebraic property (see~The\-o\-rems~\ref{thm:cyclic}, \ref{thm:abelian}, \ref{thm:nilpotent}, and~\ref{super}). In the course of the proof, we shall introduce two new constructions of skew braces that are of independent interests (see~The\-o\-rems~\ref{thm:first method} and \ref{thm:second method}). Theorem B should also be seen in connection with the problem of establishing a rigorous framework to prove that “almost all” solutions are multipermutation (see \cite[Pro\-blem~5.11]{leandro}). 

\section{Preliminaries}\label{sec:Prelim}

\noindent The aim of this section is to recall some basic results and definitions that are needed to prove our main theorems. 

\smallskip

Let $(A,+,\circ)$ be a skew brace. A subset $X$ of $A$ is said to be:
\begin{enumerate}[(1)]
    \item a {\it sub-skew brace} if it is a subgroup of both $(A,+)$ and $(A,\circ)$;
    \item a {\it left-ideal} if it is a subgroup of $(A,+)$ and $\lambda_a(X)=X$ for all $a\in A$; a left-ideal is automatically a sub-skew brace;
    \item an {\it ideal} if it is a left-ideal that is normal in both $(A,+)$ and $(A,\circ)$; in this case $A/I$ is a skew brace with induced operations.
\end{enumerate}
For example, the kernel $\Ker(\lambda)$ of the lambda map $\lambda$ is always a sub-skew brace of $A$, and the characteristic subgroups of $(A,+)$ are all left-ideals of $A$. There are two relevant ideals that often pop up in the study of solubility and nilpotency of skew braces: the {\it socle}, defined as 
\[\operatorname{Soc}(A)\!=\!Z(A,+)\cap\operatorname{Ker}(\lambda),\]
and the {\it annihilator}, defined as 
\[\operatorname{Ann}(A)=Z(A,\circ)\cap\operatorname{Soc}(A),\]
where~\hbox{$Z(A,+)$} and $Z(A,\circ)$ denote, respectively, the centres of $(A,+)$ and $(A,\circ)$. Note that the annihilator was first introduced in \cite{stefanelli} in the context of ideal extension of skew braces and later studied in \cite{bonatto}. There is also the {\it derived ideal} of $A$, defined as $A^2 = A\ast A$, which plays an important role in the study of skew braces. Here, as usual, for any subsets $X$ and $Y$ of $A$, we put $X\ast Y$ to be the subgroup of the additive group $(A,+)$ generated by the elements~$x\ast y$, where $x\in X$ and $y\in Y$.


\smallskip

Our main results deal with many algebraic properties of skew braces, and we now briefly explore them for the reader's convenience.

\smallskip

A skew brace $(A,+,\circ)$ is said to be a {\it bi-skew brace} (or a {\it symmetric} skew brace according to some authors \cite{symmetric}) if $(A,\circ,+)$ is also a skew brace. This concept was first introduced by Childs in \cite{Childs}, and his main focus was the connection between skew braces and Hopf--Galois theory. More specifically, by the Greither--Pareigis correspondence \cite{GP} and Byott's translation \cite{Byott}, a finite skew brace $(A,+,\circ)$ gives rise to a Hopf--Galois structure of type $(A,+)$ on any Galois extension with Galois group isomorphic to $(A,\circ)$ (see \cite{Childs book} and also \cite{ST}). The consideration of bi-skew braces allows one to switch the type of the Hopf--Galois structure and the Galois group of the extension.

\smallskip

A skew brace $(A,+,\circ)$ is said to be {\it two-sided} if in addition to the skew left distributivity, the {\it skew right distributivity} also holds, that is, if $$(a+b)\circ c=a\circ c-c+b\circ c,$$ for all $a,b,c\in A$. Clearly, every skew brace having an abelian multiplicative group is two-sided. It is also known by \cite{Rump0} that two-sided braces are exactly the braces that arise from radical rings.

\smallskip

Bi-skew braces and two-sided skew braces are much easier to handle. For example, it was conjectured by Byott \cite{soluble} that a finite skew brace whose additive group is soluble cannot have an insoluble multiplicative group. Some  significant progress was made in \cite{Byott transitive}, but this conjecture is still open. Nevertheless, it is known to be true when restricted to bi-skew braces and two-sided skew braces (see \cite[Theorem 3.11]{ST2} and \cite[Theorem 4.3]{two-sided}). In terms of the lambda map, a skew brace $(A,+,\circ)$ is a bi-skew brace if and only if
\begin{equation}\label{biskewbrace}
\lambda_{ab}=\lambda_b\lambda_a\quad\textnormal{and}\quad \lambda_{\lambda_a(b)} = \lambda_a\lambda_b\lambda_a^{-1}
\end{equation}
for all $a,b\in A$ (see \cite[Theorem 3.1]{Caranti}), and by its proof $\lambda_a\in \Aut(A,\circ)$ for all $a\in A$ in this case. In terms of the star product, while we only have 
\[ a * (b+c) = a*b + b + 
a*c -b\]
in an arbitrary skew brace $(A,+,\circ)$, we also have the identity
\[ (a+b)*c = -b + a*c + b + b*c\]
in a two-sided skew brace $(A,+,\circ)$. These nice properties make calculations a lot simpler in many occasions.

\smallskip

A skew brace $(A,+,\circ)$ is said to be {\it weakly trivial} if $A^2 \cap A_{\textnormal{op}}^2 = \{0\}$. Here $A_{\textnormal{op}}=(A,+_{\textnormal{op}},\circ)$, where $+_{\textnormal{op}}$ is defined by $a+_{\textnormal{op}} b = b+a$ for all $a,b\in A$, denotes the {\it opposite skew brace} of $A$ as defined in \cite{opposite}. This concept first appeared in \cite{Trap} as a tool to study two-sided skew braces, and it was shown in \cite[Corollary 4.4]{Trap} that every two-sided skew brace is an extension of a weakly trivial skew
brace by a two-sided brace.

\smallskip

A skew brace $(A,+,\circ)$ is said to be {\it $\lambda$-homomorphic} if its lambda map $\lambda$ is not only a homomorphism on $(A,\circ)$ but also on $(A,+)$. This definition is due to \cite{lambdahom}, where it was applied to construct skew braces of free or free-abelian type. Clearly, the derived ideal of a $\lambda$-homomorphic skew brace is contained in $\Ker(\lambda)$, so a $\lambda$-homomorphic skew brace is {\it meta-trivial} in the sense that its derived ideal is trivial as a skew brace. Using \eqref{biskewbrace}, it is also easy to check that a $\lambda$-homomorphic skew brace with abelian image $\mathrm{Im}(\lambda)$ is a bi-skew brace (also see \cite[Corollary 4.6]{symmetric}).

\smallskip

A skew brace $(A,+,\circ)$ is said to be {\it one-generator} if there exists $a\in A$ such that the smallest sub-skew brace containing $a$ is $A$. In case of braces, this concept has an unexpected relationship with indecomposable involutive solutions to the YBE (see \cite{Rump01}). As shown in \cite{one-generator}, among the one-generator braces $A$ for which $A*A^2 = \{0\}=A^2*A$, there is a universal brace with additive group $\mathbb{Z}\times\mathbb{Z}$ that admits all such braces as an epimorphic image.

\subsection{Multipermutation solutions and nilpotency of skew braces}

\hfill\smallskip

\noindent Let $(X,r)$ be a solution of the YBE and write $r(x,y)=(\lambda_x(y),\rho_y(x))$ for all $x,y\in X$. Define an equivalence relation $\sim$ on $X$ by putting 
\[ x\sim y\,\ \iff \,\ (\lambda_x=\lambda_y\,\ \textnormal{and} \,\ \rho_x=\rho_y)\]
for all $x,y\in X$. Then $\operatorname{Ret}(X,r)=(\overline{X},\overline r)$, where $\overline{X} = X/{\sim}$ and
\[ \overline{r} : ([x],[y])\in \overline{X}\times\overline{X}\longmapsto ([\lambda_x(y)],[\rho_y(x)])\in \overline{X}\times\overline{X},\]
is also a solution of the YBE, called the {\it retraction} of $(X,r)$. By recursion, we can then define 
\[\operatorname{Ret}^0(X,r) = (X,r),\quad \operatorname{Ret}^{m+1}(X,r)=\operatorname{Ret}(\operatorname{Ret}^m(X,r))\]
for all $m\geq0$. We say that $(X,r)$ is {\it multipermutation} if the underlying set of $\operatorname{Ret}^m(X,r)$ becomes singleton for some $m$. 

\smallskip

Now, let $(A,+,\circ)$ be a skew brace. By taking socle or annihilator recursively, we can define the {\it socle series} by putting 
\[\operatorname{Soc}_{0}(A)=\{0\},\quad \operatorname{Soc}_{m+1}(A)/\mathrm{Soc}_m(A)=\operatorname{Soc}(A/\mathrm{Soc}_m(A))\]
for all $m\geq 0$, and the {\it annihilator series} by putting 
\[\operatorname{Ann}_{0}(A)=\{0\},\quad \operatorname{Ann}_{m+1}(A)/\mathrm{Ann}_m(A)=\operatorname{Ann}(A/\mathrm{Ann}_m(A))\]
for all $m\geq 0$. They are analogs of the upper central series. Following \cite{nilpotent}, we say that $A$ has {\it finite multipermutation level} if $\operatorname{Soc}_m(A)=A$ for some $m$. Similarly, we say that $A$ is {\it annihilator nilpotent} (or {\it centrally nilpotent}) if $\operatorname{Ann}_m(A)=A$ for some $m$. Clearly, if $A$ is annihilator nilpotent, then $A$ has finite multipermutation level.

\smallskip

The following result characterises multipermutation sb-solutions in terms of the multipermutation level of their associated skew braces (see \cite[Proposition 5.3]{Jordan}).

\begin{theorem}\label{thm:fml}
Let $(X,r)$ be an sb-solution and let $A$ be its associated skew brace. Then $(X,r)$ is multipermutation if and only if $A$ has finite multipermutation level. In fact, the smallest $m$ for which $|\mathrm{Ret}^m(X,r)|=1$ coincides with that for which $\mathrm{Soc}_m(A)=A$.
\end{theorem}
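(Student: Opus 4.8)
The plan is to show that passing to the retraction of an sb-solution corresponds, on the skew-brace side, to quotienting by the socle, and then to iterate. Write $(X,r)=(A,r_A)$, so that $\lambda_a$ is the $\lambda$-map of the skew brace $A$ and $\rho_b(a)=\lambda_a(b)^{-1}\circ a\circ b$. The core claim I would establish is: for $a,b\in A$ one has $a\sim b$ if and only if $a$ and $b$ lie in a common coset of $\operatorname{Soc}(A)$. Since $\operatorname{Soc}(A)$ is an ideal, $a+\operatorname{Soc}(A)=a\circ\operatorname{Soc}(A)$ for every $a$ (indeed $a\circ\operatorname{Soc}(A)=a+\lambda_a(\operatorname{Soc}(A))=a+\operatorname{Soc}(A)$), so this condition may be written equivalently as $-a+b\in\operatorname{Soc}(A)$ or as $a^{-1}\circ b\in\operatorname{Soc}(A)$.

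To prove the core claim I would set $c:=a^{-1}\circ b$, so that $b=a\circ c$, and proceed in two steps. Because $\lambda$ is a homomorphism from $(A,\circ)$, one always has $\lambda_b=\lambda_{a\circ c}=\lambda_a\lambda_c$, so the condition $\lambda_a=\lambda_b$ is equivalent to $\lambda_c=\id$, that is, to $c\in\Ker(\lambda)$. Assuming this, I would expand $\rho_a(x)$ and $\rho_b(x)=\lambda_x(a\circ c)^{-1}\circ x\circ a\circ c$ using $b=a\circ c$, the additivity of each $\lambda_x$, the homomorphism property of $\lambda$, the skew left distributivity, and the identity $c\circ z=c+z$ (valid for $c\in\Ker(\lambda)$, since then $z=\lambda_c(z)=-c+c\circ z$); a direct, if somewhat laborious, computation then shows that $\rho_a=\rho_b$ holds if and only if, in addition, $c\in Z(A,+)$. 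Combining the two steps yields $c\in\Ker(\lambda)\cap Z(A,+)=\operatorname{Soc}(A)$, which is the claim.

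Granting the claim, the assignment $[a]\mapsto a+\operatorname{Soc}(A)$ is a well-defined bijection $\overline{A}\to A/\operatorname{Soc}(A)$, and it carries $\overline{r_A}$ to $r_{A/\operatorname{Soc}(A)}$ essentially for free: the $\lambda$-map and the operation $\circ$ of $A/\operatorname{Soc}(A)$ are those induced from $A$, so both components of $\overline{r_A}([a],[b])=([\lambda_a(b)],[\rho_b(a)])$ match those of $r_{A/\operatorname{Soc}(A)}(a+\operatorname{Soc}(A),b+\operatorname{Soc}(A))$ under the bijection. Hence $\operatorname{Ret}(A,r_A)\cong(A/\operatorname{Soc}(A),r_{A/\operatorname{Soc}(A)})$. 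Iterating, and using the correspondence theorem for the ideal $\operatorname{Soc}_m(A)$ together with $\operatorname{Soc}_{m+1}(A)/\operatorname{Soc}_m(A)=\operatorname{Soc}(A/\operatorname{Soc}_m(A))$, a straightforward induction on $m$ gives $\operatorname{Ret}^m(A,r_A)\cong(A/\operatorname{Soc}_m(A),r_{A/\operatorname{Soc}_m(A)})$ for every $m\ge 0$. In particular $|\operatorname{Ret}^m(X,r)|=|A/\operatorname{Soc}_m(A)|$, which equals $1$ exactly when $\operatorname{Soc}_m(A)=A$; this yields both the stated equivalence and the equality of the two smallest such $m$. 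The one genuinely technical point is the second half of the core claim — checking that, for $c\in\Ker(\lambda)$, the equality $\rho_a=\rho_b$ is equivalent to centrality of $c$ in $(A,+)$ — whereas everything else is bookkeeping with quotient skew braces.
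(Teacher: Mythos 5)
The paper does not actually prove Theorem~\ref{thm:fml}; it is quoted from \cite[Proposition 5.3]{Jordan}, so there is no internal proof to compare against. Your overall strategy --- identify $\operatorname{Ret}(A,r_A)$ with $(A/\operatorname{Soc}(A),r_{A/\operatorname{Soc}(A)})$ via the claim that $a\sim b$ iff $a^{-1}\circ b\in\operatorname{Soc}(A)$, then iterate --- is the standard and correct route, and your reductions (the $\lambda$-part via $\lambda_{a\circ c}=\lambda_a\lambda_c$, the identification of classes with cosets, the induction using $\operatorname{Soc}_{m+1}(A)/\operatorname{Soc}_m(A)=\operatorname{Soc}(A/\operatorname{Soc}_m(A))$) are all sound.

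The problem is that the one step you defer as ``a direct, if somewhat laborious, computation'' is precisely where the mathematical content sits, and a direct expansion does \emph{not} land on the condition $c\in Z(A,+)$. Carrying it out: with $b=a\circ c$ and $c\in\Ker(\lambda)$ one finds $\lambda_x(b)=\lambda_x(a)\circ\lambda_{\rho_a(x)}(c)$, hence $\rho_b(x)=\lambda_{\rho_a(x)}(c)^{-1}\circ\rho_a(x)\circ c$, so (using that $\rho_a$ is a bijection) $\rho_a=\rho_b$ is equivalent to $y\circ c=\lambda_y(c)\circ y$ for all $y\in A$, which expands to $y+\lambda_y(c)-y=\lambda_y(c)$, i.e.\ to ``$\lambda_y(c)$ commutes additively with $y$, for every $y$.'' This is formally weaker than $c\in Z(A,+)$, and an extra idea is needed to upgrade it: apply the condition at $y^{-1}$ and use $y^{-1}=\lambda_{y^{-1}}(-y)$, so that $[\lambda_{y^{-1}}(c),\lambda_{y^{-1}}(-y)]_+=\lambda_{y^{-1}}([c,-y]_+)=0$ forces $[c,y]_+=0$ for every $y$. (The converse direction does need that $\operatorname{Soc}(A)$ is a left ideal, so that $\lambda_y(c)$ is again central.) So your core claim is true and the proof can be completed along exactly the lines you describe, but as written the proposal asserts rather than proves its crux, and the missing step is not mere bookkeeping --- without the $y\mapsto y^{-1}$ manoeuvre (or an equivalent one) the ``only if'' half of the claim does not follow from the expansion alone.
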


Let $(A,+,\circ)$ be a skew brace. By taking the star product recursively, we can define the {\it left series} by putting
\[ A^1 = A,\quad A^{m+1} = A* A^m\]
for all $m\geq 1$, and the {\it right series} by putting
\[ A^{(1)}=A,\quad A^{(m+1)} = A^{(m)} *A \]
for all $m\geq 1$. They are analogs of the lower central series. We say that $A$ is {\it left-nilpotent} if $A^m=\{0\}$ for some $m$, and similarly that $A$ is {\it right-nilpotent} if $A^{(m)}=\{0\}$ for some $m$.

\smallskip

The property of having finite multipermutation level can be more easily detected in case the skew brace is of nilpotent type. Indeed, for any skew brace $A$ of nilpotent type, by \cite[Lemma 2.16]{nilpotent} we know that
\[ \mbox{$A$ has finite multipermutation level}\,\ \iff \,\ 
\mbox{$A$ is right-nilpotent},\]
and similarly, by \cite[Corollary 2.15]{JVV} we know that
\[ \mbox{$A$ is annihilator nilpotent}\,\ \iff \,\ \mbox{$A$ is both left- and right-nilpotent}.\] 
Some further properties of annihilator nilpotency are described in \cite{centralnilp}, \cite{bonatto}, \cite{nilpotent}, \cite{colazzo}, and \cite{Tr23}. For example, a finite skew brace $(A,+,\circ)$ is annihilator nilpotent only when $(A,+)$ and $(A,\circ)$ are both nilpotent (see \cite[Corollary~2.11]{bonatto}), in which case the~Sy\-low subgroups of $(A,+)$ are all ideals and~$A$ is a direct product of them (this is a well-known fact, which we explicitly state below since we need it in the proofs of our main theorems). It follows that a finite skew brace is annihilator nilpotent if and only if the additive~Sy\-low subgroups are all ideals that are annihilator nilpotent as skew braces (also see \cite[Theorem 4.13]{centralnilp} for a ``local" version of this).

\begin{prop}\label{prop:nilpotent}
Let $(A,+,\circ)$ be a finite skew brace whose additive and multiplicative groups are nilpotent. Then, for each prime $p$, the Sylow~\hbox{$p$-sub}\-group of $(A,+)$ is an ideal, and $A$ is the direct product of these ideals.
\end{prop}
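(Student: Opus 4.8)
The plan is to combine two facts recalled in the preliminaries: every characteristic subgroup of $(A,+)$ is a left-ideal of $A$ (hence a sub-skew brace), and a skew brace and its multiplicative group always have the same cardinality. The nilpotency hypotheses then force the additive Sylow subgroups to be simultaneously Sylow in $(A,\circ)$, and normality will propagate from one structure to the other.

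First I would fix a prime $p$ and set $A_p$ to be the unique Sylow $p$-subgroup of $(A,+)$, which exists and is characteristic because $(A,+)$ is finite and nilpotent. By the remark above, $A_p$ is a left-ideal of $A$; in particular $\lambda_a(A_p)=A_p$ for all $a\in A$, and $A_p$ is a subgroup of $(A,\circ)$. Since $|(A,\circ)|=|A|=|(A,+)|$ and $|A_p|$ is precisely the $p$-part of this integer, $A_p$ is a Sylow $p$-subgroup of $(A,\circ)$ as well.

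Now I would use the nilpotency of $(A,\circ)$: in a finite nilpotent group every Sylow subgroup is normal. Thus $A_p$ is normal in $(A,\circ)$, and being a left-ideal that is normal in both $(A,+)$ and $(A,\circ)$, it is an ideal of $A$. For the direct-product statement, recall that in a finite nilpotent group distinct Sylow subgroups centralise each other and jointly generate the group; applying this to both $(A,+)$ and $(A,\circ)$ yields internal decompositions $(A,+)=\bigoplus_p A_p$ and $(A,\circ)=\prod_p A_p$, and these are compatible, so the obvious map $\prod_p A_p\to A$ is an isomorphism of skew braces.

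I do not expect any real obstacle; the argument is essentially bookkeeping with Sylow theory. The only slightly delicate point is the step identifying $A_p$ as a Sylow subgroup of $(A,\circ)$ — which uses nothing but $|(A,+)|=|(A,\circ)|$ together with the fact that left-ideals are sub-skew braces — after which the nilpotency of $(A,\circ)$ does all the work of upgrading ``left-ideal'' to ``ideal''.
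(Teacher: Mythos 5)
Your argument is correct, and in fact the paper offers no proof of this proposition at all --- it is stated as a ``well-known fact'', so your write-up simply supplies the standard argument the authors have in mind: the additive Sylow $p$-subgroup is characteristic, hence a left-ideal, hence a subgroup of $(A,\circ)$ of the right order, hence a normal Sylow subgroup of the nilpotent group $(A,\circ)$, hence an ideal. The only step worth spelling out slightly more is the word ``compatible'' at the end: it follows because for an ideal $I$ one has $a\circ I=a+I$ for every $a\in A$, so the internal additive decomposition $(A,+)=\bigoplus_p A_p$ and the multiplicative one coincide componentwise, making $(a_p)_p\mapsto\sum_p a_p$ an isomorphism of skew braces.
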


\subsection{Supersoluble solutions and supersolubility of skew braces}\label{supersolublesec}

\hfill\smallskip

\noindent Recall that a finite skew brace $(A,+,\circ)$ is said to be {\it supersoluble} if it has a finite series of ideals 
\[\{0\}=I_0\subseteq I_1\subseteq\ldots\subseteq I_m=A\]
such that $I_{i+1}/I_i$ has prime order for every $i=0,\ldots,m-1$. 
This concept was introduced in \cite{supersoluble} for the first time, where a lot of nice properties were shown. For example, every finite skew brace of square-free order (more generally, every finite skew brace all of whose additive and multiplicative Sylow subgroups are cyclic) is supersoluble (see \cite[Theorem 3.8]{supersoluble}). Supersolubility also offers a large setting in which skew brace exhibits desirable behaviors. For example, although the sum of two annihilator nilpotent ideals need not be annihilator nilpotent in general (see \cite[Example B]{centralnilp}), such is the case in the context of supersoluble skew braces (see \cite[Corollary 3.37]{supersoluble}). 

\smallskip

Every skew brace of prime order is a trivial brace, so every finite supersoluble skew brace is {\it soluble} in the sense of \cite[Definition 18]{ballester}. Here, it follows from \cite[Proposition 20]{ballester} that a skew brace $(A,+,\circ)$ is soluble if and only it has a finite series of ideals
\[\{0\}=I_0\subseteq I_1\subseteq\ldots\subseteq I_m=A\]
such that $I_{i+1}/I_i$ is a trivial brace for every $i=0,\ldots,m-1$. The notion of soluble solution of the YBE was also introduced in \cite[Definition 1]{ballester}, and the following definition should be seen in comparison with \cite{ballester}. In \cite[The\-o\-rems~C and D]{ballester}, the relationship between soluble solutions and soluble skew braces was discussed. 

\begin{remark}\label{irreparable}
There is an irreparable mistake in the proof of the backward implication of \cite[Theorem C]{ballester} --- in the notation there the step
\[ \Ker(g)\cap B = 0\,\ \implies \,\ \{f_k(0)\} = \overline{f}_k(\Ker(g))\cap Y_k\]
in line 1 on p.19 is not valid because intersection is not preserved under mappings in general. Therefore, in our Definition \ref{supersolutions}, we impose other necessary conditions that were not present in \cite[Definition 1]{ballester}, and our The\-o\-rem~\ref{thm:supersoluble solution} only gives a sufficient condition for a finite sb-solution to be supersoluble. Let us also mention that the proof of \cite[Theorem D]{ballester} on p.19 seems to contain a gap as well --- in the notation there the required morphism $f_n$ was not specified, and even if one takes $f_n$ to be the canonical map
\[f_n : x\in X\longmapsto \iota(x)\in G(X,r),\]
which seems to be the natural choice based on the definition of $f_1,\dots,f_{n-1}$, one cannot show that $X_n$ is an equivalence class under $\sim_{f_n}$ (see below for the definition) because $\iota$ is not injective in general.
\end{remark}

Let $(X,r)$ and $(Y,s)$ be any solutions of the YBE. Recall that a homomorphism $f: (X,r)\rightarrow (Y,s)$ is a map such that the diagram
\[\begin{tikzcd}[column sep=1.5cm,row sep=1.25cm]
X \times X \arrow{r}{r} \arrow{d}[left]{f\times f}& X\times X\arrow{d}{f\times f}\\
Y\times Y\arrow{r}[below]{s} &  Y\times Y
\end{tikzcd}\]
commutes. Write $r(x,y) = (\lambda_x(y),\rho_y(x))$ for $x,y\in X$ as usual. Note that if~$Z$ is a subset of $X$ such that $\lambda_x(Z) = Z$ and $\rho_x(Z)=Z$ for all $x\in X$, then clearly $r(Z\times Z)=Z\times Z$
and so $r$ induces a solution $(Z,r|_{Z\times Z})$ via restriction. In this case 
$s(f(Z)\times f(Z)) = f(Z)\times f(Z)$ by the commutativity of the above diagram, and so $s$ induces a solution $(f(Z),s|_{f(Z)\times f(Z)})$ via restriction. Now, we can define an equivalence relation $\sim_f$ on $X$ by putting
\[ x\sim_f y \,\ \iff \,\ f(x)=f(y)\]
for all $x,y\in X$. Using this notation, we give the following definition.

\begin{definition}\label{supersolutions}
A finite solution $(X,r)$ of the YBE is said to be {\it supersoluble at $x_0$} if there exists a series of subsets 
\[\{x_0\}=X_0\subseteq X_1\subseteq\ldots\subseteq X_m=X\]
such that there are solutions $(Y_i,s_i)$ and morphisms $f_i:(X,r)\rightarrow(Y_i,s_i)$ of solutions for $i=0,\dots,m-1$ satisfying all of the following conditions:
\begin{enumerate}[$(1)$]
\item $X_i$ is an equivalence class under $\sim_{f_i}$;
\item the equivalence classes under $\sim_{f_i}$ all have size $|X_i|$;
\item $X_{i+1}$ is the union of a collection of equivalence classes under $\sim_{f_i}$;
\item $\lambda_x(X_{i+1})=X_{i+1}$ and $\rho_x(X_{i+1})=X_{i+1}$ for all $x\in X$; 
\item $(f_{i}(X_{i+1}),s_i|_{f_{i}(X_{i+1})\times f_{i}(X_{i+1})})$ is a trivial solution of prime cardinality.
\end{enumerate}
An sb-solution $(X,r)$ is said to be {\it supersoluble} if it is supersoluble at $0$.
\end{definition}


\begin{theorem}\label{thm:supersoluble solution}
Let $(X,r)$ be a finite sb-solution and let $A$ be its associated skew brace. If $A$ is supersoluble, then $(X,r)$ is supersoluble.
\end{theorem}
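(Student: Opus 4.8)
The plan is to transfer the defining ideal series of the supersoluble skew brace $A$ directly into the data required by Definition~\ref{supersolutions}. Since $A$ is supersoluble, fix a series of ideals
\[\{0\}=I_0\subseteq I_1\subseteq\ldots\subseteq I_m=A\]
with $|I_{j+1}/I_j|$ prime for each $j$, and identify the underlying set $X$ of $(X,r)=(A,r_A)$ with $A$. For $i=0,\dots,m-1$ I would take $X_i=I_i$, $(Y_i,s_i)=(A/I_i,r_{A/I_i})$, and $f_i=\pi_i\colon A\to A/I_i$ the canonical projection, together with $X_m=A$. Since a skew brace homomorphism $\phi$ satisfies $\phi(\lambda_a(b))=\lambda_{\phi(a)}(\phi(b))$ and $\phi(a\circ b)=\phi(a)\circ\phi(b)$, a one-line computation shows that $\pi_i\times\pi_i$ intertwines $r_A$ with $r_{A/I_i}$, so each $f_i$ is a morphism of solutions; also $x_0=0\in X_0=\{0\}$, which is the base point forced on an sb-solution.

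It then remains to verify conditions $(1)$–$(5)$. Conditions $(1)$–$(3)$ are purely additive: $x\sim_{f_i}y$ holds precisely when $x$ and $y$ lie in the same coset of $I_i$ in $(A,+)$, so the $\sim_{f_i}$-classes are exactly these cosets, all of cardinality $|I_i|=|X_i|$, the class of $0$ equals $I_i=X_i$, and $X_{i+1}=I_{i+1}$ is a union of such cosets because $I_i\subseteq I_{i+1}$ as additive subgroups. Writing $r_A(a,b)=(\lambda_a(b),\rho_b(a))$ with $\rho_b(a)=\lambda_a(b)^{-1}\circ a\circ b$, condition $(4)$ has two halves: $\lambda_x(I_{i+1})=I_{i+1}$ for all $x\in A$ holds because $I_{i+1}$ is a left-ideal; and for the $\rho$-half, reducing modulo the ideal $I_{i+1}$ and using $\lambda_0=\operatorname{id}$ gives $\overline{\rho_x(a)}=\overline{x}^{-1}\circ\overline{x}=\overline{0}$ for all $a\in I_{i+1}$, so $\rho_x$ maps $I_{i+1}$ into itself and hence, being a bijection of the finite set $A$, restricts to a bijection of $I_{i+1}$, i.e.\ $\rho_x(I_{i+1})=I_{i+1}$.

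For condition $(5)$, note that since $(4)$ holds, $r_A$ restricts to a solution on $X_{i+1}\times X_{i+1}$ — namely $r_{I_{i+1}}$ — and, by commutativity of the diagram defining the morphism $f_i$, $s_i$ restricts to a solution on $f_i(X_{i+1})\times f_i(X_{i+1})$, where $f_i(X_{i+1})=I_{i+1}/I_i$. As $\pi_i$ restricts to a surjective skew brace homomorphism $I_{i+1}\to I_{i+1}/I_i$ intertwining the two, this induced solution is $r_{I_{i+1}/I_i}$; and since $|I_{i+1}/I_i|$ is prime, $I_{i+1}/I_i$ is a trivial brace on a cyclic group of prime order, for which $r_{I_{i+1}/I_i}$ is readily checked ($\circ=+$, $\lambda=\operatorname{id}$, additive group abelian) to be the flip, that is, a trivial solution of prime cardinality. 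With $(1)$–$(5)$ in hand, $(X,r)$ is supersoluble at $0$, hence supersoluble.

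The only steps requiring any care are the $\rho$-invariance in $(4)$ and the identification of the induced solution in $(5)$, and I expect the former — confirming $\rho_x(I_{i+1})\subseteq I_{i+1}$ via the quotient $A/I_{i+1}$ — to be the (mild) obstacle; neither is deep. The real point is that Definition~\ref{supersolutions} has been set up so as to mirror the ideal series of a supersoluble skew brace, which is exactly why this one-directional implication goes through smoothly — and, conversely, why a clean converse is not to be expected, cf.\ Remark~\ref{irreparable}.
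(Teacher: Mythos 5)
Your proposal is correct and follows essentially the same route as the paper: take the prime-order ideal series of $A$, let $X_i=I_i$ and $f_i$ be the canonical projection onto $A/I_i$, and check conditions (1)--(5) of Definition~\ref{supersolutions}, with condition (5) coming down to the fact that a skew brace of prime order is a trivial brace. The only difference is that you spell out the $\rho$-invariance in condition (4) by reducing modulo $I_{i+1}$, where the paper simply invokes that $I_{i+1}$ is an ideal; both are fine.
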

\begin{proof}
Suppose that $A$ is supersoluble and let 
\[\{0\}=I_0\subseteq I_1\subseteq\ldots\subseteq I_m=A\]
be a series of ideals of $A$ such that the consecutive factors have prime order. For each $i=0,\dots,m-1$, clearly the canonical epimorphism
\[f_i:a\in (A,r_A)\longmapsto a+I_i\in  (A/I_i,r_{A/I_i})\]
of skew braces is also an epimorphism of solutions. The equivalence classes of $\sim_{f_i}$ are precisely the cosets of $I_i=\operatorname{Ker}(f_i)$, so conditions (1), (2), and (3) are clear. As for condition (4), for any $a\in A$, since $I_{i+1}$ is an ideal of $A$ we plainly have $\lambda_a(I_{i+1})=I_{i+1}$ and $\rho_a(I_{i+1})=I_{i+1}$.
 Note that $I_{i+1}/I_{i}$ is a trivial brace because it has prime order. It then follows that
\begin{align*} (f_i(I_{i+1}), r_{A/I_i}|_{f_i(I_{i+1})\times f_i(I_{i+1})})& = (I_{i+1}/I_i, r_{A/I_i}|_{I_{i+1}/I_i\times I_{i+1}/I_i}) \\ & = (I_{i+1}/I_i,r_{I_{i+1}/I_i}) \end{align*}
is a trivial solution of prime cardinality, so condition (5) also holds.
\end{proof}

\section{Constructions of skew braces and examples}

\noindent Before we prove our main results, we shall first introduce two new methods to construct skew braces from groups using semi-direct products, and then we shall apply them to give some related examples. 

\smallskip

Let us first explain the ideas behind the constructions. Let $(A,+,\circ)$ be a skew brace and suppose that
\[ (A,+) = (B,+)\rtimes (C,+),\quad (A,\circ ) = (B,\circ)\rtimes (C,\circ)\]
for some ideal $B$ and sub-skew brace $C$. For simplicity, we assume that $B$ and $C$ are trivial skew braces, so that we can regard them as groups. Note that conjugation by $(C,+)$ induces a homomorphism
\[ \phi : c\in C\longmapsto\phi_c :=(b\longmapsto c + b-c)\in \Aut(B),\]
and similarly conjugation by $(C,\circ)$ induces a homomorphism
\[ \psi : c\in C \longmapsto \psi_c := (b\longmapsto c \circ b \circ{c}^{-1}) \in \Aut(B).\]
Since $B$ is an ideal, the lambda map also induces a homomorphism
\[
\gamma : c\in C \longmapsto\gamma_c:=(b\longmapsto \lambda_c(b))\in \Aut(B).
\]
In what follows, let $b,b_1,b_2\in B$ and $c,c_1,c_2\in C$. Clearly, we have
\[ (b_1+c_1) + (b_2+c_2) = (b_1+\phi_{c_1}(b_2)) + (c_1+c_2).\] Now, the $\circ$-product of two elements of $(A,+)$ can be expressed in two different ways,  leading to two different constructions. 

\smallskip

First, assume $\gamma=\psi$. Then we can write
\[ b\circ c = c\circ \psi_{c}^{-1}(b) = c\circ \gamma_{c}^{-1}(b) = c+b =\phi_c(b) + c.\]
Since $B$ and $C$ are trivial skew braces, we have
\[
(b_1\circ c_1)\circ (b_2\circ c_2) = (b_1+ \psi_{c_1}(b_2))\circ (c_1+ c_2),\]
which we can write as
\[ (\phi_{c_1}(b_1)+c_1)\circ(\phi_{c_2}(b_2)+c_2) = \phi_{c_1+c_2}(b_1+\psi_{c_1}(b_2)) + (c_1+c_2).\]
The first construction~(see The\-o\-rem~\ref{thm:first method}) is based on this equality.

\smallskip

Next, without any assumption on $\gamma$ and $\psi$, we can write
\[ c\circ b = c+\gamma_c(b) = (\phi_c\gamma_c)(b) + c.\]
Again, since $B$ and $C$ are trivial skew braces, we have 
\[(c_1\circ b_1)\circ (c_2\circ b_2) = (c_1+ c_2) \circ (\psi_{c_2}^{-1}(b_1)+b_2),\]
which we can write as
\begin{align*}
&\big((\phi_{c_1}\gamma_{c_1})(b_1)+c_1\big)\circ \big((\phi_{c_2}\gamma_{c_2})(b_2)+c_2\big) \\
&\hspace{2cm}= (\phi_{c_1+c_2}\gamma_{c_1+c_2})(\psi_{c_2}^{-1}(b_1)+b_2) +(c_1+c_2).
\end{align*}
The second construction (see Theorem \ref{thm:second method}) is based on this observation. 

\smallskip

For both constructions we shall assume that $B$ is an abelian group. 

\begin{theorem}\label{thm:first method}
    Let $B=(B,+)$ be an abelian group and $C=(C,\cdot)$ a group. Given any homomorphisms $\phi,\psi : c\in C\longmapsto \phi_c,\psi_c\in\Aut(B)$, define
\[ (b_1,c_1) + (b_2,c_2) = (b_1 + \phi_{c_1}(b_2),c_1c_2)\]
and 
\[ (\phi_{c_1}(b_1),c_1) \circ (\phi_{c_2}(b_2),c_2) =  \big(\phi_{c_1c_2}\big(b_1+\psi_{c_1}(b_2)\big),c_1c_2\big)\]
for all $b_1,b_2\in B$ and $c_1,c_2\in C$. 

\smallskip

Then $(B\times C,+,\circ)$ is a skew brace if and only if the relation
\begin{equation}\label{eq:first method} 
\phi_c\psi_{c'}=\psi_{c'}\phi_{c}
\end{equation}
holds for all $c,c'\in C$. In this case, we have\textnormal:
\begin{enumerate}[$(a)$]
\item $(B\times C,+,\circ)$ is two-sided if and only if
\[ \phi_{c}\phi_{c'}=\phi_{c'}\phi_{c}\quad\textnormal{and}\quad (\phi_c\psi_c-\mathrm{id}_B)(\psi_{c'}-\mathrm{id}_B)=0\]
hold for all $c,c'\in C$.
\item $(B\times C,+,\circ)$ is a bi-skew brace if and only if
\[ \psi_c\psi_{c'}=\psi_{c'}\psi_c\quad\textnormal{and}\quad (\phi_c\psi_c-\mathrm{id}_B)(\phi_{c'}-\mathrm{id}_B)=0\]
hold for all $c,c'\in C$.
\item $(B\times C,+,\circ)$ is $\lambda$-homomorphic if and only if
\[ (\phi_c-\mathrm{id}_B)(\phi_{c'}-\mathrm{id}_B)=0\quad\textnormal{and}\quad (\phi_c-\mathrm{id}_B)(\psi_{c'}-\mathrm{id}_B)=0\]
hold for all $c,c'\in C$.
\end{enumerate}
\end{theorem}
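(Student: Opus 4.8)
The plan is to set up the verification directly: with $(B\times C,+)$ and the prescribed $\circ$-operation in hand, I would first confirm that both are actually group operations, and then check the skew left distributivity law, extracting the necessary-and-sufficient condition \eqref{eq:first method} along the way. For the additive side there is nothing to do --- $(B\times C,+)$ is just the semidirect product $B\rtimes_\phi C$, which is a group because $\phi$ is a homomorphism. For the $\circ$-side the subtlety is that the formula is written in the ``twisted'' coordinates $(\phi_{c_i}(b_i),c_i)$; reparametrising via $a_i = \phi_{c_i}(b_i)$ (equivalently $b_i = \phi_{c_i}^{-1}(a_i)$), I would rewrite the product as $(a_1,c_1)\circ(a_2,c_2) = \big(a_1 + \phi_{c_1}\psi_{c_1}\phi_{c_1}^{-1}(a_2),\, c_1c_2\big)$, which exhibits $(B\times C,\circ)$ as the semidirect product $B\rtimes_{\psi'} C$ with $\psi'_c = \phi_c\psi_c\phi_c^{-1}$; hence $\circ$ is automatically a group operation with identity $(0,1)$ and inverses computed in the usual semidirect-product way. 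Note that with the additive identity also $(0,1)$, the two operations share a neutral element, as they must in a skew brace.

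Next I would compute $\lambda_{(b_1,c_1)}(b_2,c_2) = -(b_1,c_1) + (b_1,c_1)\circ(b_2,c_2)$ using the explicit formulas. The cleanest route is again the twisted coordinates: writing elements as $(\phi_c(b),c)$, one gets $(b_1,c_1)\circ(b_2,c_2)$ after converting $(b_i,c_i)$ into the form $(\phi_{c_i}(\phi_{c_i}^{-1}(b_i)),c_i)$, so that the $\circ$-formula applies, and then subtracting $(b_1,c_1)$ additively. After simplification $\lambda_{(b_1,c_1)}$ should act as $(b_2,c_2)\mapsto \big(\phi_{c_1}(\psi_{c_1}-\mathrm{id}_B)(\phi_{c_1}^{-1}(b_2)) + \text{(something involving }c_1,c_2),\, c_2\big)$ on the $C$-coordinate trivially and on $B$ by an affine-type map; the key point is that $\lambda$ restricted to the $C$-coordinate is the identity, and on $B$ it is governed by $\phi$ and $\psi$. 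Then the skew left distributivity $a\circ(b+c) = a\circ b - a + a\circ c$ reduces, after plugging in and cancelling, to a single identity in $\Aut(B)$ that must hold for all arguments; I expect this identity to collapse exactly to the commuting relation $\phi_c\psi_{c'} = \psi_{c'}\phi_c$ for all $c,c'\in C$, using that $B$ is abelian (so additive conjugation is trivial and only the $\phi$- and $\psi$-actions on $B$ survive). This establishes the main equivalence.

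For parts $(a)$, $(b)$, $(c)$ I would assume \eqref{eq:first method} holds and then translate each property into an identity via the criteria already recorded in the excerpt. For $(b)$: a skew brace is a bi-skew brace iff $\lambda_{ab} = \lambda_b\lambda_a$ and $\lambda_{\lambda_a(b)} = \lambda_a\lambda_b\lambda_a^{-1}$ for all $a,b$ (equation \eqref{biskewbrace}); substituting the formula for $\lambda$ computed above, the first relation should unpack to $\psi_c\psi_{c'} = \psi_{c'}\psi_c$ and the second to $(\phi_c\psi_c-\mathrm{id}_B)(\phi_{c'}-\mathrm{id}_B)=0$. For $(c)$: $\lambda$-homomorphic means $\lambda_{a+b} = \lambda_a\lambda_b$; since $(a+b)$ has $C$-coordinate the product $c_1c_2$ and $B$-coordinate twisted by $\phi$, comparing with $\lambda_a\lambda_b$ should force $(\phi_c-\mathrm{id}_B)(\phi_{c'}-\mathrm{id}_B)=0$ (from the ``$\phi$-part'') and $(\phi_c-\mathrm{id}_B)(\psi_{c'}-\mathrm{id}_B)=0$ (from the cross term). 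For $(a)$: two-sidedness is the skew right distributivity $(a+b)\circ c = a\circ c - c + b\circ c$; expanding both sides in coordinates and cancelling the $C$-parts, the $B$-part comparison should yield $\phi_c\phi_{c'}=\phi_{c'}\phi_c$ together with $(\phi_c\psi_c-\mathrm{id}_B)(\psi_{c'}-\mathrm{id}_B)=0$. In each case I would present the computation in twisted coordinates to keep the algebra manageable, and verify that the derived identity is not merely necessary but also sufficient by checking that it makes every intermediate step an equality.

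\textbf{Main obstacle.} The routine but error-prone part is the bookkeeping in the twisted coordinates $(\phi_c(b),c)$: the $\circ$-product, $\circ$-inverse, and $\lambda$-map all involve compositions of $\phi_c, \phi_c^{-1}, \psi_c$ evaluated at products $c_1c_2$, and one must repeatedly use that $\phi,\psi$ are homomorphisms ($\phi_{c_1c_2} = \phi_{c_1}\phi_{c_2}$, etc.) and that $B$ is abelian to collapse terms. The genuine conceptual point --- and the place where \eqref{eq:first method} is forced --- is that without $\phi_c\psi_{c'} = \psi_{c'}\phi_c$ one cannot simultaneously realise $(A,\circ)$ as a semidirect product and have $\lambda$ land in $\Aut(B,+)$ compatibly; isolating precisely this obstruction, as opposed to a strictly stronger commuting condition, is the step requiring the most care.
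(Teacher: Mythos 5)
Your overall plan coincides with the paper's: verify the two group structures, then check each axiom by direct computation in the twisted coordinates $(\phi_c(b),c)$ and extract the required identities by comparing the coefficients of $b_1,b_2,b_3$. For the main equivalence the comparison does collapse to exactly $\phi_{c}\psi_{c'}=\psi_{c'}\phi_{c}$ (the $b_1$- and $b_3$-terms match automatically and the $b_2$-terms force $\phi_{c_3}\psi_{c_1}\phi_{c_3}^{-1}=\psi_{c_1}$), and for (a) and (c) your predicted identities are the ones the paper obtains. The only genuinely different choice is in (b), where you invoke the criterion \eqref{biskewbrace} rather than checking the reversed distributive law $a+(b\circ c)=(a+b)\circ a^{-1}\circ(a+c)$ directly as the paper does; both routes are legitimate, but yours depends on having the correct formula for $\lambda$ (see below).

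There is one concrete error: your untwisting of $\circ$ is wrong. Substituting $b_i=\phi_{c_i}^{-1}(u_i)$ into the defining formula and using $\phi_{c_1c_2}=\phi_{c_1}\phi_{c_2}$ gives
\[ (u_1,c_1)\circ(u_2,c_2)=\big(\phi_{c_1}\phi_{c_2}\phi_{c_1}^{-1}(u_1)+\phi_{c_1}\phi_{c_2}\psi_{c_1}\phi_{c_2}^{-1}(u_2),\,c_1c_2\big), \]
not $\big(u_1+\phi_{c_1}\psi_{c_1}\phi_{c_1}^{-1}(u_2),c_1c_2\big)$; the coefficient of $u_1$ is $\phi_{c_1}\phi_{c_2}\phi_{c_1}^{-1}$, which is not the identity in general, so $(B\times C,\circ)$ is \emph{not} a semidirect product $B\rtimes_{\psi'}C$ in the plain coordinates and your justification that $\circ$ is a group operation does not stand as written. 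The repair is exactly the paper's observation: the bijection $(b,c)\mapsto(\phi_c^{-1}(b),c)$ carries $\circ$ onto the semidirect product operation of $B\rtimes_\psi C$, so $\circ$ is a group operation by transport of structure. The same slip infects your guessed formula for $\lambda$; the correct one is $\lambda_{(\phi_c(b),c)}(\phi_y(x),y)=\big((\phi_y-\mathrm{id}_B)(b)+(\phi_y\psi_c)(x),\,y\big)$, which indeed fixes the $C$-coordinate as you anticipate. Once these are corrected, the rest of your plan goes through and the remaining work is the coefficient bookkeeping you already describe.
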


\begin{proof} Clearly $(B\times C,+)$ is a group, while $(B\times C,\circ)$ is a group because $\circ$ is the operation on $B\times C$ induced via transport by the bijection
\[ (b,c)\in B\times C\longmapsto (\phi_{c}^{-1}(b),c)\in B\rtimes_\psi C.\]
In what follows, define 
\[ a_1= (\phi_{c_1}(b_1),c_1),\quad a_2=(\phi_{c_2}(b_2),c_2),\quad a_3= (\phi_{c_3}(b_3),c_3)\]
with $b_1,b_2,b_3\in B$ and $c_1,c_2,c_3\in C$. 

\smallskip

For the skew left distributivity, observe that
\begin{align*}
&\hspace{5mm} a_1\circ (a_2+a_3)\\
& = a_1\circ \big(\phi_{c_2}(b_2+\phi_{c_3}(b_3)),c_2c_3\big)\\
& = \big(\phi_{c_1c_2c_3}(b_1 + \psi_{c_1}(\phi_{c_3}^{-1}(b_2) + b_3)),c_1c_2c_3\big)
\end{align*}
on the one hand, and
\begin{align*}
&\hspace{5mm}(a_1\circ a_2)-a_1 + (a_1\circ a_3)\\
&= \big(\phi_{c_1c_2}(b_1+\psi_{c_1}(b_2)),c_1c_2\big)+(-b_1,c_1^{-1}) + \big(\phi_{c_1c_3}(b_1+\psi_{c_1}(b_3)),c_1c_3\big)\\
& = \big((\phi_{c_1c_2}\psi_{c_1})(b_2),c_1c_2c_1^{-1}\big) + \big(\phi_{c_1c_3}(b_1+\psi_{c_1}(b_3)),c_1c_3\big)\\
&= \big((\phi_{c_1c_2}\psi_{c_1})(b_2) + \phi_{c_1c_2c_3}(b_1+\psi_{c_1}(b_3)),c_1c_2c_3\big)\end{align*}
on the other. Note that the terms involving $b_1$ and $b_3$ are always equal. By comparing the terms with $b_2$, we then see that $(B\times C,+,\circ)$ is a skew brace if and only if the relation
\[ \phi_{c_3}\psi_{c_1}\phi_{c_3}^{-1}=\psi_{c_1}\]
always holds, which is as claimed.

\smallskip

Now, suppose that \eqref{eq:first method} holds.

\smallskip

\noindent \underline{Proof of (a)}

\smallskip

\noindent For the skew right distributivity, we have
\begin{align*}
&\hspace{5mm}(a_1+a_2)\circ a_3\\
& = \big(\phi_{c_1}(b_1+\phi_{c_2}(b_2)),c_1c_2\big)\circ a_3\\
& = \big(\phi_{c_1c_2c_3}(\phi_{c_2}^{-1}(b_1) + b_2 + \psi_{c_1c_2}(b_3)),c_1c_2c_3\big)
\end{align*}
on the one hand, and
\begin{align*}
&\hspace{5mm} (a_1\circ a_3)-a_3+(a_2\circ a_3)\\
& = \big(\phi_{c_1c_3}(b_1+\psi_{c_1}(b_3)),c_1c_3\big) + (-b_3,c_3^{-1})+\big(\phi_{c_2c_3}(b_2+\psi_{c_2}(b_3)),c_2c_3\big)\\
& = \big(\phi_{c_1c_3}
(b_1+\psi_{c_1}(b_3)-b_3),c_1\big) + \big(\phi_{c_2c_3}(b_2+\psi_{c_2}(b_3)),c_2c_3\big)\\
& = \big(\phi_{c_1c_3}(b_1+\psi_{c_1}(b_3)-b_3)+\phi_{c_1c_2c_3}(b_2+\psi_{c_2}(b_3)),c_1c_2c_3\big)
\end{align*}
on the other. Note that the terms involving $b_2$ are always equal. By comparing the terms with $b_1$ and $b_3$, respectively, we see that $(B\times C,+,\circ)$ is two-sided if and only if both 
\[\phi_{c_2c_3}\phi_{c_2}^{-1} =\phi_{c_3}\quad\textnormal{and}\quad \phi_{c_2c_3}\psi_{c_1c_2} = \phi_{c_3}(\psi_{c_1}-\mathrm{id}_B) + \phi_{c_2c_3}\psi_{c_2}\]
always hold. Using the former, the latter can be simplified to
\[ \phi_{c_2} (\psi_{c_1}-\mathrm{id}_B)\psi_{c_2} = \psi_{c_1}-\mathrm{id}_B,\]
and the claim (a) now follows from \eqref{eq:first method}.

\smallskip

\noindent \underline{Proof of (b)}

\smallskip

\noindent For the skew left distributivity with $+$ and $\circ$ reversed, we have
\begin{align*}
&\hspace{5mm} a_1 + (a_2\circ a_3)\\
&= a_1 + \big(\phi_{c_2c_3}(b_2+\psi_{c_2}(b_3)),c_2c_3\big)\\
& = \big(\phi_{c_1}(b_1+\phi_{c_2c_3}(b_2+\psi_{c_2}(b_3))),c_1c_2c_3\big)
\end{align*}
on the one hand, and
\begin{align*}
&\hspace{5mm}(a_1+a_2)\circ a_1^{-1}\circ (a_1+a_3)\\
& = \big(\phi_{c_1}(b_1+\phi_{c_2}(b_2)),c_1c_2\big)\circ \big((\phi_{c_1}^{-1}\psi_{c_1}^{-1})(-b_1),c_1^{-1}\big)\circ (a_1+a_3)\\
& = \big(\phi_{c_1c_2c_1^{-1}}\big(\phi_{c_2}^{-1}(b_1)+b_2 + (\psi_{c_1c_2}\psi_{c_1}^{-1})(-b_1)\big),c_1c_2c_1^{-1}\big)\circ (a_1+a_3)\\
&= \big(\phi_{c_1c_2c_3}\big(\phi_{c_2}^{-1}(b_1)+b_2 - \psi_{c_1c_2c_1^{-1}}(b_1) + \psi_{c_1c_2c_1^{-1}}(\phi_{c_3}^{-1}(b_1)+b_3)\big),c_1c_2c_3\big)
\end{align*}
on the other. Note that the terms involving $b_2$ are always equal. By comparing the terms with $b_3$ and $b_1$, respectively, we see that $(B\times C,+,\circ)$ is a bi-skew brace if and only if both 
\[ \psi_{c_2} = \psi_{c_1c_2c_1^{-1}} \quad\textnormal{and}\quad \mathrm{id}_B = \phi_{c_2c_3}\big(\phi_{c_2}^{-1}+\psi_{c_1c_2 c_1^{-1}}( \phi_{c_3}^{-1}-\mathrm{id}_B)\big) \]
always hold. Using the former, the latter can be simplified to
\[ \phi_{c_3}^{-1}-\mathrm{id}_B = \psi_{c_2}(\phi_{c_3}^{-1}-\mathrm{id}_B)\phi_{c_2},\]
and the claim (b) now follows from \eqref{eq:first method}.

\smallskip

\noindent \underline{Proof of (c)}

\smallskip

\noindent For any $b,x\in B$ and $c,y\in C$, we have
\begin{align*} \lambda_{(\phi_c(b),c)}(\phi_y(x),y) &= 
(-b,c^{-1}) + \big(\phi_{cy}(b+\psi_c(x)),cy\big)\\
& = \big((\phi_y-\mathrm{id}_B)(b) + (\phi_y\psi_c)(x),y\big).
\end{align*}
Using this, we get that
\[ \lambda_{a_1+a_2}(a_3) = \big((\phi_{c_3}-\mathrm{id}_B)(\phi_{c_2}^{-1}(b_1)+b_2)+(\phi_{c_3}\psi_{c_1c_2})(b_3),c_3\big)\]
on the one hand, and
\[ \lambda_{a_1\circ a_2}(a_3) = \big((\phi_{c_3}-\mathrm{id}_B)(b_1+\psi_{c_1}(b_2)) + (\phi_{c_3}\psi_{c_1c_2})(b_3),c_3\big)\]
on the other. Note that the terms involving $b_3$ are always equal. By comparing the terms involving $b_1$ and $b_2$, respectively, we see that $(B\times C,+,\circ)$ is $\lambda$-homomorphic if and only if both
\[ (\phi_{c_3}-\mathrm{id}_B)(\phi_{c_2}^{-1}-\mathrm{id}_B) =0\quad\textnormal{and}\quad  (\phi_{c_3}-\mathrm{id}_B)(\mathrm{id}_B-\psi_{c_1}) = 0\]
always hold, and the claim (c) now follows.
\end{proof}

\begin{theorem}\label{thm:second method}
Let $B=(B,+)$ be an abelian group and $C=(C,\cdot)$ a group. Given any homomorphisms $\phi,\gamma,\psi : c\in C\longmapsto \phi_c,\gamma_c,\psi_c\in\Aut(B)$, define
\[ (b_1,c_1) + (b_2,c_2) = \big(b_1 + \phi_{c_1}(b_2),c_1c_2\big)\]
and 
\[ \big((\phi_{c_1}\gamma_{c_1})(b_1),c_1\big) \circ \big((\phi_{c_2}\gamma_{c_2})(b_2),c_2\big) = \big( (\phi_{c_1c_2}\gamma_{c_1c_2})(\psi_{c_2}^{-1}(b_1)+b_2),c_1c_2\big)\]
for all $b_1,b_2\in B$ and $c_1,c_2\in C$. 

\smallskip

Then $(B\times C,+,\circ)$ is a skew brace if and only if the relations
\begin{equation}\label{eq:second method} \phi_{c}\gamma_{c'} = \gamma_{c'}\phi_{c}\quad\textnormal{and}\quad
\phi_{c'}(\gamma_{cc'}\psi_{cc'}^{-1} - \gamma_{c'}\psi_{c'}^{-1}) =\\\gamma_c\psi_c^{-1}-\mathrm{id}_B\end{equation}
hold for all $c,c'\in C$. In this case, we have:
\begin{enumerate}[$(a)$]
\item Assuming that $\mathrm{Im}(\gamma)$ is abelian, we have that $(B\times C,+,\circ)$ is two-sided if and only if
\[ 
\gamma_c\psi_{c'} =\psi_{c'}\gamma_c,\quad
\phi_{c}(\phi_{c'}\psi_{c'}^{-1})=(\phi_{c'}\psi_{c'}^{-1})\phi_{c},\quad \textnormal{and}
\]
\[(\phi_c\gamma_c-\mathrm{id}_B)(\gamma_{c'}-\mathrm{id}_B)=0 \]
hold for all $c,c'\in C$.
\item Assuming that $\mathrm{Im}(\gamma)$ is abelian, we have that $(B\times C,+,\circ)$ is a bi-skew brace if and only if
\[ 
\gamma_c\psi_{c'}=\psi_{c'}\gamma_c\quad\textnormal{and}\quad
(\phi_c\gamma_c-\mathrm{id}_B)(\phi_{c'}\gamma_{c'}\psi_{c'}^{-1}-\mathrm{id}_B)=0
\]
hold for all $c,c'\in C$.
\item $(B\times C,+,\circ)$ is $\lambda$-homomorphic if and only if
\[ (\phi_c\gamma_c\psi_c^{-1}-\mathrm{id}_B)(\phi_{c'}\gamma_{c'}\psi_{c'}^{-1}-\mathrm{id}_B)=0,\quad\textnormal{and}\] \[(\phi_c\gamma_c\psi_c^{-1}-\mathrm{id}_B)(\gamma_{c'}-\mathrm{id}_B)=0\]
 hold for all $c,c'\in C$. 
\end{enumerate}
Moreover, the following hold:
\begin{enumerate}[$(a)$]\setcounter{enumi}{+3}
    \item For any $b\in B$ and $c\in C$, we have $( (\phi_c\gamma_c)(b),c)\in \Ker(\lambda)$ if and only if
    \[ b \in \bigcap_{y\in C} \Ker(\phi_y\gamma_y\psi_y^{-1}-\mathrm{id}_B)\quad\textnormal{and}\quad c\in \Ker(\gamma).\]
    \item For any subgroup $I$ of $B$, the subset $I\times \{1\}$ is an ideal of $(B\times C,+,\circ)$ if and only if $I$ is invariant under $\phi, \gamma,\psi$, namely if and only if
\begin{equation}\label{eq:ideal conditions} \phi_c(I)\subseteq I,\quad \gamma_c(I)\subseteq I,\quad \psi_c(I)\subseteq I,\end{equation}
for all $c\in C$.
\end{enumerate}
\end{theorem}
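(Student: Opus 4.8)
The plan is to verify Theorem~\ref{thm:second method} by a sequence of direct computations, parallel to the proof of Theorem~\ref{thm:first method}. First I would set up the ambient group structures: $(B\times C,+)$ is a group because it is literally the semidirect product $B\rtimes_\phi C$, and $(B\times C,\circ)$ is a group because $\circ$ is obtained by transport of the group operation on $B\rtimes_\psi C$ (modified by $\gamma$) along the bijection $(b,c)\mapsto \big((\phi_c\gamma_c)^{-1}(b),c\big)$; one should check that this bijection sends the stated $\circ$-formula to ordinary multiplication in $B\rtimes_\psi C$, which also shows that the $\circ$-inverse of $\big((\phi_c\gamma_c)(b),c\big)$ is $\big(-(\phi_{c^{-1}}\gamma_{c^{-1}})(\psi_{c^{-1}}(b)),c^{-1}\big)$ or an equivalent expression. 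Having these in hand, I would, exactly as in Theorem~\ref{thm:first method}, write general elements $a_i=\big((\phi_{c_i}\gamma_{c_i})(b_i),c_i\big)$ for $i=1,2,3$ and expand $a_1\circ(a_2+a_3)$ and $(a_1\circ a_2)-a_1+(a_1\circ a_3)$ using the addition and multiplication formulas.

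For the skew-left-distributivity, after expansion each side is a pair whose second coordinate is $c_1c_2c_3$ and whose first coordinate is a sum of terms that are (images under automorphisms of) $b_1$, $b_2$, $b_3$. Collecting coefficients of $b_2$ and $b_3$ separately should force exactly the two relations in~\eqref{eq:second method}: the first relation $\phi_c\gamma_{c'}=\gamma_{c'}\phi_c$ comes from the conjugation $\phi$ interacting with $\gamma$, and the second, more intricate, relation $\phi_{c'}(\gamma_{cc'}\psi_{cc'}^{-1}-\gamma_{c'}\psi_{c'}^{-1})=\gamma_c\psi_c^{-1}-\mathrm{id}_B$ comes from the $b_1$-coefficient after the first relation is used to simplify. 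Conversely, once these two relations hold, all coefficients match and one has a skew brace. For parts~(a), (b), (c) I would proceed identically to Theorem~\ref{thm:first method}: for~(a) compute $(a_1+a_2)\circ a_3$ versus $(a_1\circ a_3)-a_3+(a_2\circ a_3)$; for~(b) compute $a_1+(a_2\circ a_3)$ versus $(a_1+a_2)\circ a_1^{-1}\circ(a_1+a_3)$ (here $a_1^{-1}$ is the $\circ$-inverse); for~(c) first derive the formula for $\lambda_{((\phi_c\gamma_c)(b),c)}\big((\phi_y\gamma_y)(x),y\big)$, then compare $\lambda_{a_1+a_2}(a_3)$ with $\lambda_{a_1\circ a_2}(a_3)$ and collect coefficients of $b_1$ and $b_2$. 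In~(a) and~(b) one first extracts one relation (commutativity of $\gamma$ with $\psi$, or of $\phi$ with $\phi_{c'}\psi_{c'}^{-1}$) from one block of coefficients and uses it, together with~\eqref{eq:second method} and the standing hypothesis that $\mathrm{Im}(\gamma)$ is abelian, to simplify the remaining block into the displayed identity. For part~(d) I would use the $\lambda$-formula from~(c): $\big((\phi_c\gamma_c)(b),c\big)\in\Ker(\lambda)$ means $\lambda_{((\phi_c\gamma_c)(b),c)}=\mathrm{id}$, so applying it to arbitrary $\big((\phi_y\gamma_y)(x),y\big)$ and comparing both coordinates gives first $\gamma_c=\mathrm{id}_B$ for the required behaviour on the $C$-part (whence $c\in\Ker(\gamma)$) and then $(\phi_y\gamma_y\psi_y^{-1}-\mathrm{id}_B)(b)=0$ for all $y$. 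For part~(e) I would check directly that $I\times\{1\}$ is a subgroup of $(B\times C,+)$ iff $\phi_c(I)\subseteq I$ for all $c$ (normality in the semidirect product), a subgroup of $(B\times C,\circ)$ iff additionally it is $\psi$- and $\gamma$-invariant (normality in the transported semidirect product, using that the bijection twists by $\phi_c\gamma_c$), and $\lambda_a$-invariant for all $a$ using the $\lambda$-formula from~(c), which involves $\phi_y$, $\psi_c$ and $\gamma$; since $\phi_y(I)\subseteq I$ always and the other two invariances are assumed, condition~\eqref{eq:ideal conditions} is exactly what is needed.

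The main obstacle I expect is purely bookkeeping: in the $\circ$-computations the presence of the extra automorphism $\gamma$ (compared to Theorem~\ref{thm:first method}) means the first coordinates are products of three families of automorphisms $\phi,\gamma,\psi$ evaluated at various words $c_i$, $c_ic_j$, $c_1c_2c_3$, and one must repeatedly use the first relation of~\eqref{eq:second method} (that $\phi$ commutes with every $\gamma_{c'}$) to reorder factors before the coefficients can be matched. The second, genuinely delicate point is the derivation of the second relation in~\eqref{eq:second method}: it is not simply a commutation relation but a cocycle-type identity relating the values of $\gamma\psi^{-1}$ at $c$, $c'$, $cc'$, and extracting it cleanly from the $b_1$-coefficient (and checking it is both necessary and sufficient, with no hidden further constraint hiding in the $b_3$-coefficient) requires care. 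In parts~(a) and~(b) the role of the hypothesis $\mathrm{Im}(\gamma)$ abelian must be pinpointed precisely --- it is used to collapse products like $\gamma_{c_ic_j}$ into $\gamma_{c_i}\gamma_{c_j}$ in whatever order is convenient --- and one should be honest that without it the stated clean equivalences need not hold. Everything else is a routine, if lengthy, expansion in a semidirect product.
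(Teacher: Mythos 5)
Your proposal follows essentially the same route as the paper: transport of structure for the group axioms, expansion of $a_1\circ(a_2+a_3)$ versus $(a_1\circ a_2)-a_1+(a_1\circ a_3)$ with coefficient comparison in $b_1,b_2,b_3$ (the paper indeed finds the $b_3$-terms automatically equal and extracts \eqref{eq:second method} from the $b_2$- and $b_1$-coefficients), the analogous expansions for (a)--(c), and the $\lambda$-formula for (d) and (e). The one detail to repair is the transport bijection for $(B\times C,\circ)$: your map $(b,c)\mapsto\big((\phi_c\gamma_c)^{-1}(b),c\big)$ into $B\rtimes_\psi C$ does not intertwine the operations (the transported product is $(b_1,c_1)\bullet(b_2,c_2)=(\psi_{c_2}^{-1}(b_1)+b_2,c_1c_2)$, not $(b_1+\psi_{c_1}(b_2),c_1c_2)$); the paper instead uses $(b,c)\mapsto\big((\gamma_c^{-1}\phi_c^{-1})(b),c^{-1}\big)$ into the opposite group $(B\rtimes_\psi C)^{\mathrm{op}}$, which does.
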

\begin{proof} Clearly, $(B\times C,+)$ is a group, while $(B\times C,\circ)$ is a group because $\circ$ is the operation on $B\times C$ induced via transport by the bijection
\[ (b,c)\in B\times C\longmapsto \big((\gamma_c^{-1}\phi_c^{-1})(b),c^{-1}\big)\in (B\rtimes_\psi C)^{\mbox{\tiny op}},\]
where the superscript ${}^{\mbox{\tiny op}}$ denotes the opposite group. In what follows, define
\[ a_1 = ((\phi_{c_1}\gamma_{c_1})(b_1), c_1),\quad 
a_2=((\phi_{c_2}\gamma_{c_2})(b_2), c_2),\quad
a_3=((\phi_{c_3}\gamma_{c_3})(b_3), c_3)\]
with $b_1,b_2,b_3\in B$ and $c_1,c_2,c_3\in C$. 

\smallskip

For the skew left distributivity, observe that
\begin{align*}
&\hspace{5mm} a_1\circ (a_2+a_3)\\
&= a_1\circ \big( (\phi_{c_2}\gamma_{c_2})(b_2) + (\phi_{c_2c_3}\gamma_{c_3})(b_3),c_2c_3\big)\\
& = \big((\phi_{c_1c_2c_3}\gamma_{c_1c_2c_3})(\psi_{c_2c_3}^{-1}(b_1) + (\gamma_{c_2c_3}^{-1}\phi_{c_3}^{-1}\gamma_{c_2})(b_2) + \gamma_{c_3^{-1}c_2c_3}^{-1}(b_3)),c_1c_2c_3\big)
\end{align*}
on the one hand, and
\begin{align*}
&\hspace{5mm}
(a_1\circ a_2) - a_1 + (a_1\circ a_3)\\
&= \big((\phi_{c_1c_2}\gamma_{c_1c_2})(\psi_{c_2}^{-1}(b_1)+b_2),c_1c_2\big) + (\gamma_{c_1}(-b_1),c_1^{-1}) + (a_1\circ a_3)\\
&= \big( (\phi_{c_1c_2}\gamma_{c_1c_2})(\psi_{c_2}^{-1}(b_1)+b_2)
- (\phi_{c_1c_2}\gamma_{c_1})(b_1),c_1c_2c_1^{-1}\big) + (a_1\circ a_3)\\
&=  \big( (\phi_{c_1c_2}\gamma_{c_1c_2})(\psi_{c_2}^{-1}(b_1)+b_2)
- (\phi_{c_1c_2}\gamma_{c_1})(b_1)   \\
&\hspace{5cm}+(\phi_{c_1c_2c_3}\gamma_{c_1c_3})(\psi_{c_3}^{-1}(b_1)+b_3),c_1c_2c_3\big)
\end{align*}
on the other. Note that the terms involving $b_3$ are always equal. By comparing the terms with $b_2$ and $b_1$, respectively, we see that $(B\times C,+,\circ)$ is a skew brace if and only if the relations
\[ \phi_{c_3}\gamma_{c_1}\phi_{c_3}^{-1} = \gamma_{c_1}\quad\textnormal{and}\quad \phi_{c_3}\gamma_{c_1c_2c_3}\psi_{c_2c_3}^{-1} = \gamma_{c_1c_2}\psi_{c_2}^{-1} -\gamma_{c_1} + \phi_{c_3}\gamma_{c_1c_3}\psi_{c_3}^{-1}\]
always hold. Using the former, we can simplify the latter to
\[ \phi_{c_3} (\gamma_{c_2c_3}\psi_{c_2c_3}^{-1}-\gamma_{c_3}\psi_{c_3}^{-1}) = \gamma_{c_2}\psi_{c_2}^{-1}-\mathrm{id}_B ,\]
and so indeed \eqref{eq:second method} is  both sufficient and necessary for $(B\times C,+,\circ)$ to be a skew brace.

\smallskip

Now, suppose that \eqref{eq:second method} holds. In (a) and (b), we shall assume further that $\mathrm{Im}(\gamma)$ is abelian.

\smallskip

\noindent\underline{Proof of (a)}

\smallskip

\noindent For the skew right distributivity, we have (using \eqref{eq:second method})
\begin{align*}
&\hspace{5mm}(a_1+a_2)\circ a_3\\
& = \big( (\phi_{c_1}\gamma_{c_1})(b_1) +(\phi_{c_1c_2}\gamma_{c_2})(b_2),c_1c_2\big)\circ a_3\\
& = \big( (\phi_{c_1c_2c_3}\gamma_{c_1c_2c_3})(\psi_{c_3}^{-1}( (\phi_{c_2}^{-1}\gamma_{c_2}^{-1})(b_1) + (\gamma_{c_1c_2}^{-1}\gamma_{c_2})(b_2)) + b_3),c_1c_2c_3\big)
\end{align*}
on the one hand, and
\begin{align*}
&\hspace{5mm} (a_1\circ a_3)-a_3+(a_2\circ a_3)\\
& =  \big( (\phi_{c_1c_3}\gamma_{c_1c_3})(\psi_{c_3}^{-1}(b_1) + b_3),c_1c_3\big) + (\gamma_{c_3}(-b_3),c_3^{-1})+(a_2\circ a_3)\\
&= \big( (\phi_{c_1c_3}\gamma_{c_1c_3})(\psi_{c_3}^{-1}(b_1)+b_3) -  (\phi_{c_1c_3}\gamma_{c_3})(b_3),c_1\big)+(a_2\circ a_3)\\
& = \big( (\phi_{c_1c_3}\gamma_{c_1c_3})(\psi_{c_3}^{-1}(b_1)+b_3) -  (\phi_{c_1c_3}\gamma_{c_3})(b_3)  \\
&\hspace{5cm}+(\phi_{c_1c_2c_3}\gamma_{c_2c_3})( \psi_{c_3}^{-1}(b_2)+b_3),c_1c_2c_3\big)
\end{align*}
on the other. Since $\mathrm{Im}(\gamma)$ is abelian here, by comparing the terms with $b_2$, we see that it is necessary that
\[\gamma_{c_1} \psi_{c_3}^{-1} \gamma_{c_1}^{-1} =  \psi_{c_3}^{-1}.\]
Without loss of generality, we may assume that this equality always holds. Keeping \eqref{eq:second method} and the fact that $\mathrm{Im}(\gamma)$ is abelian in mind, by further comparing the terms involving $b_1$ and $b_3$, respectively, we see that $(B\times C,+,\circ)$ is two-sided if and only if the relations
\begin{align*}
 \phi_{c_2}(\phi_{c_3}\psi_{c_3}^{-1})\phi_{c_2}^{-1} &= \phi_{c_3}\psi_{c_3}^{-1},\quad \textnormal{and}\\
\phi_{c_2}\gamma_{c_1c_2} & = (\gamma_{c_1}-\mathrm{id}_B) + \phi_{c_2}\gamma_{c_2}
\end{align*}
also always hold. It follows that the claim (a) is true.

\smallskip

\noindent \underline{Proof of (b)}

\smallskip

\noindent For the skew left distributivity with $+$ and $\circ$ reversed, we have
\begin{align*}
&\hspace{5mm}a_1 + (a_2\circ a_3)\\
& = a_1 + \big((\phi_{c_2c_3}\gamma_{c_2c_3})(\psi_{c_3}^{-1}(b_2)+b_3),c_2c_3\big)\\
&= \big((\phi_{c_1}\gamma_{c_1})(b_1) + (\phi_{c_1c_2c_3}\gamma_{c_2c_3})(\psi_{c_3}^{-1}(b_2)+b_3),c_1c_2c_3\big)
\end{align*}
on the one hand, and (using \eqref{eq:second method})
\begin{align*}
&\hspace{5mm}(a_1+a_2)\circ a_1^{-1}\circ (a_1+a_3)\\
& = \big((\phi_{c_1}\gamma_{c_1})(b_1) + (\phi_{c_1c_2}\gamma_{c_2})(b_2),c_1c_2\big)\circ \big((\phi_{c_1}^{-1}\gamma_{c_1}^{-1})(\psi_{c_1}(-b_1)),c_1^{-1}\big) \circ (a_1+a_3)\\
& = \big((\phi_{c_1c_2c_1^{-1}}\gamma_{c_1c_2c_1^{-1}})(\psi_{c_1}( (\phi_{c_2}^{-1}\gamma_{c_2}^{-1})(b_1) + (\gamma_{c_1c_2}^{-1}\gamma_{c_2})(b_2)-b_1), c_1c_2c_1^{-1}\big) \circ (a_1+a_3)\\
&= \big( (\phi_{c_1c_2c_3}\gamma_{c_1c_2c_3})(\psi_{c_3}^{-1}( (\phi_{c_2}^{-1}\gamma_{c_2}^{-1}-\mathrm{id}_B)(b_1) + (\gamma_{c_1c_2}^{-1}\gamma_{c_2})(b_2)) \\
&\hspace{5cm}+ (\phi_{c_3}^{-1}\gamma_{c_3}^{-1})(b_1) + (\gamma_{c_1c_3}^{-1}\gamma_{c_3})(b_3)),c_1c_2c_3\big) 
\end{align*}
on the other. Since $\mathrm{Im}(\gamma)$ is abelian here, the terms involving $b_3$ are always equal. Keeping \eqref{eq:second method} in mind, by comparing the terms involving $b_1$ and $b_2$, respectively, we see that $(B\times C,+,\circ)$ is a bi-skew brace if and only if
\begin{align*}
\mathrm{id}_B & = \phi_{c_2c_3}\gamma_{c_2c_3}\big(\psi_{c_3}^{-1}(\phi_{c_2}^{-1}\gamma_{c_2}^{-1}-\mathrm{id}_B) + \phi_{c_3}^{-1}\gamma_{c_3}^{-1}\big),\quad\textnormal{and}\\
\psi_{c_3}^{-1} & = \gamma_{c_1}\psi_{c_3}^{-1}\gamma_{c_1}^{-1}
\end{align*}
always hold. Using \eqref{eq:second method}, the former relation can be rearranged to
\[ \phi_{c_3}^{-1}\gamma_{c_3}^{-1}(\phi_{c_2}^{-1}\gamma_{c_2}^{-1}-\mathrm{id}_B) =\psi_{c_3}^{-1}(\phi_{c_2}^{-1}\gamma_{c_2}^{-1}-\mathrm{id}_B),\]
and we see that the claim (b) holds.

\smallskip

\noindent\underline{Proof of (c)}

\smallskip

\noindent For any $b,x\in B$ and $c,y\in C$, we have (using \eqref{eq:second method})
\begin{align}\label{eq:second lambda}
&\lambda_{((\phi_c\gamma_c)(b),c)}((\phi_y\gamma_y)(x),y) \\\notag
&\hspace{2cm} = (\gamma_c(-b),c^{-1})+ \big( (\phi_{cy}\gamma_{cy})(\psi_y^{-1}(b) + x),cy\big)\\\notag
&\hspace{2cm}= \big( \gamma_c(\phi_y\gamma_y\psi_y^{-1}-\mathrm{id}_B)(b) + (\phi_y\gamma_{cy})(x),y\big).
\end{align}
Using this and again \eqref{eq:second method}, we get
\begin{align*}
\lambda_{a_1+a_2}(a_3) 
&= \big(\gamma_{c_1c_2}(\phi_{c_3}\gamma_{c_3}\psi_{c_3}^{-1}-\mathrm{id}_B)( (\phi_{c_2}^{-1}\gamma_{c_2}^{-1})(b_1)+(\gamma_{c_1c_2}^{-1}\gamma_{c_2})(b_2))\\
&\hspace{6.5cm}  + (\phi_{c_3}\gamma_{c_1c_2c_3})(b_3),c_3\big)
\end{align*}
on the one hand, and
\begin{align*}
\lambda_{a_1\circ a_2}(a_3) & = \big(\gamma_{c_1c_2}(\phi_{c_3}\gamma_{c_3}\psi_{c_3}^{-1}-\mathrm{id}_B)(\psi_{c_2}^{-1}(b_1)+b_2)\\
&\hspace{4cm} + (\phi_{c_3}\gamma_{c_1c_2c_3})(b_3),c_3\big)
\end{align*}
on the other. The terms involving $b_3$ are always equal. Thus, by comparing the terms involving $b_1$ and $b_2$, respectively, we see that $(B\times C,+,\circ)$ is $\lambda$-homomorphic if and only if both
\begin{align*}(\phi_{c_3}\gamma_{c_3}\psi_{c_3}^{-1} -\mathrm{id}_B)(\phi_{c_2}^{-1}\gamma_{c_2}^{-1}\psi_{c_2}-\mathrm{id}_B)&=0,\quad\textnormal{and}\\
(\phi_{c_3}\gamma_{c_3}\psi_{c_3}^{-1} -\mathrm{id}_B)(\gamma_{c_2c_1^{-1}c_2^{-1}}-\mathrm{id}_B) & =0
\end{align*}
always hold, and the claim (c) now follows.

\smallskip

\noindent\underline{Proof of (d)}

\smallskip

\noindent It follows from \eqref{eq:second lambda} that $((\phi_c\gamma_c)(b),c)\in \Ker(\lambda)$ if and only if
\[ \gamma_c(\phi_y\gamma_y\psi_y^{-1}-\mathrm{id}_B)(b) + \phi_y(\gamma_{c} - \mathrm{id}_B)\gamma_y(x) =0\]
for all $x\in B$ and $y\in C$. Taking $x=0$ and $y=1$, respectively, we see that 
\[ (\phi_y\gamma_y\psi_y^{-1} - \mathrm{id}_B)(b) =0\quad\textnormal{and}\quad  (\gamma_c-\mathrm{id}_B)(x) =0\]
for all $y\in C$ and $x\in B$, namely
\[ ( b,c)\in \left(\bigcap_{y\in C}\Ker(\phi_y\gamma_y\psi_y^{-1}-\mathrm{id}_B)\right)\times \Ker(\gamma),\]
whenever $((\phi_c\gamma_c)(b),c)\in\Ker(\lambda)$. The converse is clear, and this proves the claim (d).

\smallskip

\noindent\underline{Proof of (e)}

\smallskip

\noindent Clearly, $I\times \{1\}$ is a subgroup of $(B\times C,+)$, and since $B$ is an abelian group, the first inclusion in \eqref{eq:ideal conditions} is equivalent to
\begin{enumerate}[(1)]
    \item $I\times \{1\}$ being a normal subgroup of $(B\times C,+)$.
\end{enumerate}
For any $b,x\in B$ and $c\in C$, note that
\begin{align*}
\lambda_{((\phi_c\gamma_c)(b),c)}(x,1) 
&=(\gamma_c(x),1)
\end{align*}
by \eqref{eq:second lambda}, and a straightforward calculation shows that
\begin{align*}
 \big((\phi_c\gamma_c)(b),c\big)\circ (x,1)\circ \big((\phi_c\gamma_c)(b),c\big)^{-1} 
 & = (\psi_c(x),1).
\end{align*}
Hence, the second and third inclusions in \eqref{eq:ideal conditions} are equivalent to
\begin{enumerate}[(1)]\setcounter{enumi}{+1}
    \item $I\times \{1\}$ being a left-ideal of $(B\times C,+,\circ)$, and
    \item $I\times \{1\}$ being normal in $(B\times C,\circ)$,
\end{enumerate}
respectively. Then, (1), (2), and (3) above are exactly the conditions that we need for $I\times \{1\}$ to be an ideal of $(B\times C,+,\circ)$.
\end{proof}

We shall now apply Theorems \ref{thm:first method} and \ref{thm:second method} to construct examples of skew braces of order $pq$ and $p^2q$ that do not satisfy certain algebraic properties, where $p,q$ are distinct primes satisfying a certain divisibility condition.

\begin{example}\label{ex:pq1}Let $p,q$ be any primes with $q\mid p^2-1$.
\begin{enumerate}[(a)]
\item If $q\mid p-1$, take $B = (\mathbb{F}_p,+)$ and $C = (\mathbb{F}_q,+)$.
\item If $q\nmid p-1$, take $B = (\mathbb{F}^2_p,+)$ and $C = (\mathbb{F}_q,+)$.
\end{enumerate}
In both cases, we can find $\beta\in \Aut(B)$ of order $q$, and note that $\beta - \mathrm{id}_B$ is invertible. Indeed, in case (a) this is clear, and in case (b) this is because $1$ cannot be an eigenvalue of $\beta$ by the condition $q\nmid p-1$.
\begin{enumerate}[(i)]
\item Consider the homomorphisms
\begin{align*}
    \phi : c \in C\longmapsto \phi_c=\mathrm{id}_B\in\mathrm{Aut}(B)
\end{align*}
and
\begin{align*}
    \psi : c \in C\longmapsto \psi_c=\beta^c\in\mathrm{Aut}(B),
\end{align*}
which clearly satisfy \eqref{eq:first method}. We then obtain a skew brace $(B\times C,+,\circ)$ from Theorem \ref{thm:first method}. Since
\[ (\phi_1\psi_1-\mathrm{id}_B)(\psi_1-\mathrm{id}_B) = (\beta-\mathrm{id}_B)^2\]
is not zero, this skew brace is not two-sided.
\item Consider the homomorphisms
\begin{align*}
    \phi : c \in C&\longmapsto \phi_c=\beta^c\in\mathrm{Aut}(B)
\end{align*}
and
\begin{align*}
    \psi : c \in C&\longmapsto \psi_c=\mathrm{id}_B\in\mathrm{Aut}(B),
\end{align*}
which clearly satisfy \eqref{eq:first method}. We then obtain a skew brace $(B\times C,+,\circ)$ from Theorem \ref{thm:first method}. Since
\begin{align*}
 (\phi_1\psi_1-\mathrm{id}_B)(\phi_1-\mathrm{id}_B) &= (\beta-\mathrm{id}_B)^2\\
(\phi_1-\mathrm{id}_B)(\phi_1-\mathrm{id}_B) & = (\beta-\mathrm{id}_B)^2
 \end{align*}
are not zero, respectively, this skew brace is not a bi-skew brace and is not $\lambda$-homomorphic.
\end{enumerate}
\end{example}

\begin{example}\label{ex:pq2}Let $p,q$ be any primes with $q\mid p-1$, and let $g$ be an integer of multiplicative order $q$ modulo $p$. Take $B = (\mathbb{F}_p^2,+)$ and $C = (\mathbb{F}_q,+)$. Here $\left[\begin{smallmatrix}
        1 & 1  \\  1& -1 \end{smallmatrix}\right]$
is invertible because~$p$ is odd. Let us consider the homomorphisms
\begin{align*}
    \phi : c \in C&\longmapsto \Big[\begin{smallmatrix}
        g^{-1} &  \\  & g 
    \end{smallmatrix} \Big]^c\in\mathrm{Aut}(B),\\
    \gamma : c \in C&\longmapsto \Big[\begin{smallmatrix}
        g &  \\  & 1 
    \end{smallmatrix} \Big]^c\in\mathrm{Aut}(B),\quad\textnormal{and}\\
    \psi : c\in C & \longmapsto \Big[\begin{smallmatrix}
        1 & 1  \\  1& -1 
    \end{smallmatrix}\Big]\Big[\begin{smallmatrix}
        g &  \\  & 1 
    \end{smallmatrix} \Big]^c\Big[\begin{smallmatrix}
        1 & 1  \\  1& -1 \end{smallmatrix}\Big]^{-1}\in\mathrm{Aut}(B),
\end{align*}
which satisfy the first relation in \eqref{eq:second method}. For any $c\in \mathbb{F}_q$, observe that
\[ \gamma_c\psi_{c}^{-1}= \frac{1}{2}\begin{bmatrix}
        g^c+1 & -g^c+1\\ g^{-c}-1 & g^{-c}+1
    \end{bmatrix}.\]
Using this, it is not hard to check that the second relation in \eqref{eq:second method} also holds. We then obtain a skew brace $(B\times C,+,\circ)$ from Theorem \ref{thm:second method}. This skew brace clearly has no ideal of order $q$, and by \eqref{eq:ideal conditions}, it has no ideal of order $p$ either because $\gamma_1,\, \psi_1$ have no eigenvectors in common. In particular, this skew brace is not supersoluble.
\end{example}

Finally, we shall apply Theorem \ref{thm:second method} to construct examples of skew braces of order $p^3$ that do not possess certain algebraic properties, where $p$ is any odd prime.

\begin{example}\label{ex:p^3}Let $p$ be any odd prime. Take $B = (\mathbb{F}_p^2,+)$ and $C = (\mathbb{F}_p,+)$. Here $\left[\begin{smallmatrix}
        1 & 1  \\  1& -1 \end{smallmatrix}\right]$
is invertible since $p$ is odd. Consider the homomorphisms
\begin{align*}
    \phi : c \in C&\longmapsto \phi_c=\Big[\begin{smallmatrix}
        1 & -2c  \\  & 1 
    \end{smallmatrix} \Big]\in\mathrm{Aut}(B),\\
    \gamma : c \in C&\longmapsto \gamma_c=\Big[\begin{smallmatrix}
        1 & c \\  & 1 
    \end{smallmatrix} \Big]\in\mathrm{Aut}(B),\quad\textnormal{and}\\
    \psi : c\in C & \longmapsto \psi_c=\Big[\begin{smallmatrix}
        1 & 1  \\  1& -1 \end{smallmatrix}\Big]\Big[\begin{smallmatrix}
        1 & c \\  & 1 
    \end{smallmatrix} \Big]\Big[\begin{smallmatrix}
        1 & 1  \\  1& -1 \end{smallmatrix}\Big]^{-1}\in\mathrm{Aut}(B),
\end{align*}
which clearly satisfy the first relation in (\ref{eq:second method}). For any $c\in \mathbb{F}_p$, we have
\[ \gamma_c\psi_c^{-1}
=\frac{1}{2}\begin{bmatrix}
   -c^2-c+2 & c^2+3c\\ -c & c+2 
\end{bmatrix}.\]
Using this, one can check that the second relation in \eqref{eq:second method} also holds. We then obtain a skew brace $(B\times C,+,\circ)$ from Theorem \ref{thm:second method}. It is easy to check that $\gamma_1$ and $\psi_1$ do not commute, so this skew brace is not two-sided and not a bi-skew brace. Note also that
\[ (\phi_1\gamma_1\psi_1^{-1}-\mathrm{id}_B)(\gamma_1-\mathrm{id}_B) =\left[\begin{smallmatrix} 0 & 0 \\ 0 & -\frac{1}{2}\end{smallmatrix}\right]\]
is not zero, so this skew brace is not $\lambda$-homomorphic either. Moreover, note that $\Ker(\gamma)=\{0\}$, and that
\[ \Ker(\phi_1\gamma_1\psi_1^{-1}-\mathrm{id}_B) =\Ker\left[\begin{smallmatrix} 0 & -1 \\ -\frac{1}{2} & \frac{1}{2}\end{smallmatrix}\right]=\{0\}.\]
It follows that $\Ker(\lambda)=\{0\}$ for this skew brace, so in particular
\[ \mathrm{Soc}(B\times C,+,\circ) = \{0\}\]
and hence the skew brace is not of finite multipermutation level.
\end{example}

\section{Proof of Theorem A}

\begin{theorem}\label{thm:cyclic}
Let $n$ be a natural number, and let $p_1^{\alpha_1}\ldots p_t^{\alpha_t}$ be its prime factorisation. Then the following are equivalent:
\begin{enumerate}[$(1)$]
\item Every sb-solution of cardinality $n$ is a flip solution.
\item Every skew brace of order $n$ is a trivial brace.
\item Every skew brace of order $n$ is trivial.
\item Every skew brace of order $n$ is almost trivial.
\item Every skew brace of order $n$ is weakly trivial.
\item Every skew brace of order $n$ is one-generator.
\item $\alpha_i=1$ for every $i$, and $p_i$ does not divide $p_j-1$ for $i\neq j$.
\end{enumerate}
\end{theorem}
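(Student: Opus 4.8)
The plan is to prove the cycle of implications $(7)\Rightarrow(3)\Rightarrow(2)\Rightarrow(1)$, $(3)\Rightarrow(4)\Rightarrow\cdots$, etc., by first establishing the genuinely arithmetical core $(7)\Rightarrow(3)$, and then observing that $(3)$ is trivially the strongest of the ``internal'' conditions so it implies $(2),(4),(5),(6)$ at once, while the reverse implications each fail (or are routine) unless the prime factorisation is of the special shape in $(7)$. The arithmetical heart is the equivalence of $(7)$ with ``every group of order $n$ is cyclic'': by the classical theorem (often attributed to the folklore around Hölder/Burnside, and stated e.g.\ in terms of $\gcd(n,\varphi(n))=1$ plus square-freeness), $n$ satisfies the condition in $(7)$ if and only if the only group of order $n$ is $\Z/n\Z$. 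So I would first record this as a lemma (or cite it), reducing everything to: \emph{if the only group of order $n$ is cyclic, then the only skew brace of order $n$ is the trivial one, and conversely}.

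For the forward direction $(7)\Rightarrow(3)$: let $(A,+,\circ)$ be a skew brace of order $n$. By $(7)$ and the classification lemma, $(A,+)\cong\Z/n\Z$ is cyclic; but also $|(A,\circ)|=n$, so $(A,\circ)\cong\Z/n\Z$ is cyclic too. Now $\lambda:(A,\circ)\to\Aut(A,+)$ is a homomorphism from a cyclic group of order $n$ into $\Aut(\Z/n\Z)$, which has order $\varphi(n)$. The image has order dividing $\gcd(n,\varphi(n))$, and the condition in $(7)$ forces $\gcd(n,\varphi(n))=1$ (this is exactly the ``$p_i\nmid p_j-1$'' clause together with square-freeness), so $\lambda$ is trivial, i.e.\ $a\circ b=a+b$ for all $a,b$; hence $A$ is a trivial brace. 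That gives $(3)$, and since a trivial brace is abelian, almost trivial, weakly trivial, and (being cyclic) one-generator, we immediately deduce $(2),(4),(5),(6)$. The implication $(2)$ or $(3)\Rightarrow(1)$ is also direct: if every skew brace of order $n$ is trivial, then for such $A$ the associated solution $r_A(a,b)=(\lambda_a(b),\lambda_a(b)^{-1}\circ a\circ b)=(b,a)$ is the flip, and every sb-solution of cardinality $n$ arises this way.

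For the reverse direction, I would prove the contrapositive of $(1)\Rightarrow(7)$ (which then also kills the reverse implications from $(2)$--$(6)$, since each of those properties still fails on the counterexample skew braces, or can be arranged to). Suppose $(7)$ fails. Two cases: either some $\alpha_i\ge 2$, or $p_i\mid p_j-1$ for some $i\neq j$. In the first case, writing $n=p^2m$, one can build a non-trivial skew brace of order $n$ — e.g.\ a non-trivial brace on $(\Z/p\Z)^2\times(\text{trivial of order }m)$ coming from a non-trivial $\lambda$-action, or more concretely the constructions of Section~3 (Theorem~\ref{thm:first method} with a suitable $\beta$) give a skew brace of order $p^2m$ that is not two-sided, hence certainly not trivial, so its solution is not the flip. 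In the second case, with $q=p_i\mid p_j-1=p-1$, take $B=\F_p$, $C=\F_q$, an automorphism $\beta$ of $B$ of order $q$, and apply Theorem~\ref{thm:first method} (as in Example~\ref{ex:pq1}(a)) to get a non-trivial skew brace of order $pq\mid n$; pad out by a trivial brace of the complementary order to reach order exactly $n$. Its associated sb-solution is not a flip, contradicting $(1)$. The main obstacle I anticipate is not any single implication but rather the bookkeeping in the reverse direction: one must exhibit, for every $n$ violating $(7)$, an explicit skew brace of \emph{exactly} order $n$ failing \emph{each} of properties $(1)$--$(6)$ simultaneously — the padding-by-a-trivial-factor trick handles the cardinality, and one checks that a direct product with a trivial brace preserves ``not trivial'', ``not almost trivial'', ``not weakly trivial'', and ``not one-generator'' (the last because a non-cyclic additive group cannot be one-generator), so a single family of counterexamples suffices; verifying that last point cleanly is the only place requiring care.
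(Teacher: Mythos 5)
Your overall strategy matches the paper's: reduce condition (7) to the classical characterisation of cyclic numbers, prove $(7)\Rightarrow(3)$ and deduce the remaining internal conditions, and refute everything via non-trivial braces of order $p^2$ and of order $pq$ padded by a trivial direct factor of complementary order. Your argument for $(7)\Rightarrow(3)$ --- the image of $\lambda:(A,\circ)\to\Aut(A,+)$ has order dividing $\gcd(n,\varphi(n))=1$ --- is a clean alternative to the paper's reduction to the prime case via Proposition~\ref{prop:nilpotent}, and it works.

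There is, however, a genuine gap in your treatment of condition (6). The principle you invoke, that ``a non-cyclic additive group cannot be one-generator,'' is false. For instance, the non-trivial brace on $\mathbb{F}_p^2$ with $(a_1,a_2)\circ(b_1,b_2)=(a_1+b_1+a_2b_2,\,a_2+b_2)$ is generated as a skew brace by the single element $(0,1)$: its $\circ$-square is $(1,2)$, its $+$-double is $(0,2)$, and their difference $(1,0)$ together with $(0,1)$ spans $\mathbb{F}_p^2$ additively. So being one-generator does not force the additive group to be cyclic, and your padded counterexamples are not shown to violate (6) --- indeed, in the $pq$ case of Example~\ref{ex:pq1}(i) the additive group is even cyclic, and the order-$p^2$ non-trivial braces are one-generator. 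The paper's argument for $\neg(7)\Rightarrow\neg(6)$ is different and does not reuse the non-trivial counterexamples: for a \emph{trivial} skew brace the sub-skew braces are exactly the subgroups, so a trivial skew brace is one-generator if and only if its underlying group is cyclic; since $\neg(7)$ guarantees a non-cyclic group of order $n$, the trivial skew brace on that group refutes (6). A secondary slip: Theorem~\ref{thm:first method} cannot produce a non-trivial skew brace of order $p^2$, because every homomorphism $\mathbb{Z}/p\mathbb{Z}\to\Aut(\mathbb{Z}/p\mathbb{Z})$ is trivial ($|\Aut(\mathbb{Z}/p\mathbb{Z})|=p-1$ is coprime to $p$); for the case $\alpha_i\geq 2$ you must instead appeal to the classification of braces of order $p^2$ (as the paper does via \cite{p3}) or exhibit one explicitly, e.g.\ $a\circ b=a+b+pab$ on $\mathbb{Z}/p^2\mathbb{Z}$.
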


Note that condition (7) is equivalent to requiring that all groups of order~$n$ are cyclic (this is a well-known fact; see \cite[Theorem 3.0.4]{thesissuper}).

\begin{proof} Since the flip solutions correspond to trivial braces under \eqref{rA}, the equivalence of (1) and (2) is clear. Note also that in case of a brace, the notions of ``trivial", ``almost trivial", and ``weakly trivial" coincide.

\smallskip

\noindent\underline{Proof of (2) $\sim$ (5) $\Rightarrow$ (7)}

\smallskip

For any prime $p$, braces of order $p^2$ were classified in \cite{p3}, and there is a non-trivial brace of order $p^2$.

\smallskip

For any primes $p,q$ with $q\mid p-1$, skew braces of order $pq$ were classified in \cite{pq}, and there is a non-trivial brace of order $pq$.

\smallskip

Therefore, if $n$ satisfies any one of (2) $\sim$ (5), then $n$ must also satisfy the conditions given in (7). 

\smallskip

\noindent\underline{Proof of (6) $\Rightarrow$ (7)}

\smallskip

This holds because a trivial skew brace is one-generator if and only if the underlying group is cyclic.

\smallskip

\noindent\underline{Proof of (7) $\Rightarrow$ (2) $\sim$ (6)}

\smallskip

Now, suppose that $n$ satisfies (7), and let $(A,+,\circ)$ be any skew brace of order $n$. Then $(A,+)$ and $(A,\circ)$ are both cyclic, so by Proposition \ref{prop:nilpotent}, we may assume that $n = p$ is a prime. 
But then obviously $(A,+,\circ)$ is a trivial brace. Since $(A,+)$ is cyclic, clearly~$(A,+,\circ)$ is also one-generator.
\end{proof}

\section{Proof of Theorem B}

\begin{theorem}\label{thm:abelian} 
Let $n$ be a natural number, and let $p_1^{\alpha_1}\ldots p_t^{\alpha_t}$ be its prime factorisation. Then the following are equivalent:
\begin{enumerate}[$(1)$]
\item Every sb-solution of cardinality $n$ is multipermutation.
\item Every sb-solution of cardinality $n$ is involutive.
\item Every skew brace of order $n$ has finite multipermutation level.
\item Every skew brace of order $n$ is right-nilpotent.
\item Every skew brace of order $n$ is annihilator nilpotent.
\item Every skew brace of order $n$ is a brace.
\item Every skew brace of order $n$ is two-sided.
\item Every skew brace of order $n$ is a bi-skew brace.
\item Every skew brace of order $n$ is $\lambda$-homomorphic.
\item $\alpha_i\leq2$ for every $i$, and $p_i$ does not divide $p_j^{\alpha_j}-1$ for $i\neq j$.
\end{enumerate}
\end{theorem}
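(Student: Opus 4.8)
The plan is to set up a cycle of implications among the ten conditions, isolating the purely number-theoretic condition (10) as the pivot, exactly as was done for Theorem~\ref{thm:cyclic}. First I would dispose of the easy equivalences on the ``solution side'': by Theorem~\ref{thm:fml} a skew brace has finite multipermutation level if and only if its associated solution is multipermutation, which gives (1)$\Leftrightarrow$(3); the equivalence of (2) with (6) follows because under \eqref{rA} a solution is involutive precisely when the skew brace is a brace. The implications (6)$\Rightarrow$(7),(8),(9) are trivial since every brace (abelian-type skew brace) is two-sided, a bi-skew brace, and $\lambda$-homomorphic. Likewise (5)$\Rightarrow$(3)$\Rightarrow$(4) is immediate from the inclusions recalled in Section~\ref{sec:Prelim}. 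So the real content is: (i) each of (3),(4),(7),(8),(9) forces the arithmetic condition (10); and (ii) condition (10) forces all of (2),(5),(6).

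For direction (ii), assume (10). A group of order $p^2$ is abelian, and a skew brace whose additive and multiplicative groups are both of order coprime-decomposable with all Sylow subgroups of order $p$ or $p^2$ and the cross-divisibility condition failing must itself split: by Proposition~\ref{prop:nilpotent} it suffices to check that $(A,+)$ and $(A,\circ)$ are nilpotent and then reduce to the prime-power case $n=p$ or $n=p^2$. The cross condition ``$p_i\nmid p_j^{\alpha_j}-1$'' is precisely what guarantees that a group of order $n$ has all its Sylow subgroups normal (this is the classical criterion — cf.\ the nilpotent-order characterisation), so both $(A,+)$ and $(A,\circ)$ are nilpotent and Proposition~\ref{prop:nilpotent} applies. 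For $n=p$ the skew brace is trivial; for $n=p^2$ the additive group is abelian, so $A$ is a brace, and one checks directly (the multiplicative group is also abelian of order $p^2$, both groups abelian) that $A$ is annihilator nilpotent — e.g.\ because $A^2=A\ast A\subsetneq A$ for size reasons and a brace of order $p^2$ with proper derived ideal is centrally nilpotent. This yields (5) and (6), hence all of (1)--(9).

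For direction (i), I need counterexamples of the two ``forbidden'' shapes. If some $\alpha_i\geq 3$, then $n$ is divisible by $p^3$ for some prime $p$; if $p$ is odd, Example~\ref{ex:p^3} supplies a skew brace of order $p^3$ that is not two-sided, not a bi-skew brace, not $\lambda$-homomorphic, and with $\mathrm{Soc}=\{0\}$ hence not of finite multipermutation level and (since its socle is trivial while the brace is nontrivial) not right-nilpotent either; the prime $p=2$ is excluded because $2^3\mid n$ together with (10)'s remaining clause is already forbidden, or one invokes a direct classification of skew braces of order $8$. If all $\alpha_i\leq 2$ but $p_i\mid p_j^{\alpha_j}-1$ for some $i\neq j$, then $n$ is divisible by $pq$ with $q\mid p-1$ or by $p^2q$ with $q\mid p^2-1$; in the first case a nontrivial brace of order $pq$ (which exists, cf.\ the proof of Theorem~\ref{thm:cyclic}) is already not two-sided/not a bi-skew brace once one realises the relevant $\lambda$-action is faithful, and in the second case Example~\ref{ex:pq1} gives a skew brace of order $pq$ or $p^2q$ that is not two-sided, not a bi-skew brace, and not $\lambda$-homomorphic. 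Finally, any such counterexample of order $m\mid n$ can be inflated to order $n$ by taking its direct product with a trivial brace of order $n/m$, and none of the properties (3),(4),(7),(8),(9) is inherited upward by this product (they all fail on a direct factor), so (10) is forced.

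The main obstacle I anticipate is the bookkeeping in direction (i): one must ensure that for \emph{every} way (10) can fail — a cube, or a cross-divisibility — there is a single counterexample that simultaneously breaks \emph{all five} of the properties (3),(4),(7),(8),(9), and that this failure is genuinely inherited by the order-$n$ inflation. The $\lambda$-homomorphic and right-nilpotent cases are the delicate ones: for a brace of order $pq$ one has to verify that $A^2\neq\{0\}$ implies the right series never terminates (equivalently $\mathrm{Soc}=\{0\}$), and for the $\lambda$-homomorphic clause one needs the explicit computations of $(\phi_c-\mathrm{id})(\phi_{c'}-\mathrm{id})$ etc.\ recorded in Examples~\ref{ex:pq1} and~\ref{ex:p^3}. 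Once those computations are in hand, assembling the cycle is routine.
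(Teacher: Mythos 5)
Your overall architecture (pivot on the arithmetic condition (10), reduce to prime powers via Proposition~\ref{prop:nilpotent}, and use Examples~\ref{ex:p^3} and~\ref{ex:pq1} for the counterexamples) matches the paper's, but there is a genuine error at the step you call ``trivial'': it is \emph{false} that every brace is two-sided, a bi-skew brace, and $\lambda$-homomorphic. A brace is only assumed to have abelian \emph{additive} group; two-sided braces are exactly those arising from radical rings, and already in order $p^3$ there are braces that are neither two-sided, nor bi-skew, nor $\lambda$-homomorphic. Since your only route from (10) to (7), (8), (9) passes through this claimed implication (6)$\Rightarrow$(7),(8),(9), the cycle never closes on those three conditions. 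What the paper does instead is reduce (via Proposition~\ref{prop:nilpotent}) to $n=p$ or $p^2$ and then check the identities $\lambda_{a+b}=\lambda_a\lambda_b$ and $\lambda_{\lambda_a(b)}=\lambda_a\lambda_b\lambda_a^{-1}$ explicitly for the two non-trivial braces of order $p^2$ from Bachiller's classification; two-sidedness alone is free because the multiplicative group of order $p^2$ is abelian, but $\lambda$-homomorphy and the bi-skew property are not automatic for braces and must be verified.

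Two further steps would fail as written. First, your right-nilpotency counterexample for $n$ divisible by $pq$ with $q\mid p-1$: a non-trivial brace of order $pq$ never has faithful $\lambda$-action --- since $p\nmid|\Aut(\mathbb{Z}/pq\mathbb{Z})|=(p-1)(q-1)$, the Sylow $p$-subgroup of $(A,\circ)$ lies in $\Ker(\lambda)=\operatorname{Soc}(A)$, so $\operatorname{Soc}_2(A)=A$ and such a brace is in fact right-nilpotent and of finite multipermutation level. To kill (4) you need, for instance, the almost trivial skew brace on the non-abelian group of order $pq$ (whose right series stabilises at the Sylow $p$-subgroup), or you can follow the paper and first prove (3)$\Leftrightarrow$(4)$\Leftrightarrow$(5) by testing almost trivial skew braces, after which only counterexamples to finite multipermutation level are required. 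Second, the case $8\mid n$ cannot be ``excluded'': $n=8$ itself violates (10) only through $\alpha_1=3$, so an explicit skew brace of order $8$ breaking (3), (7), (8), (9) is indispensable (the paper uses \texttt{SmallSkewbrace(8,47)}); your fallback to ``a direct classification of skew braces of order $8$'' is the actual content of that case, not an optional remark.
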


 Note that condition (10) is equivalent to requiring that all groups of order~$n$ are abelian (this is due to Dickson \cite{Dickson}, or see \cite[Theorem 4.3.1]{thesissuper}).
 
\begin{proof}

The equivalence of (1) and (3) follows from Theorem \ref{thm:fml}. Since the involutive solutions correspond to braces under \eqref{rA}, as is known by \cite{Rump0}, the equivalence of (2) and (6) is also clear.

\smallskip

\noindent\underline{Proof of (3) $\Leftrightarrow$ (4) $\Leftrightarrow$ (5)}

\smallskip

Note that for almost trivial skew braces, the notions of ``finite multipermutation level", ``right-nilpotent", and ``annihilator nilpotent" coincide with the underlying group being nilpotent. Thus, it suffices to consider the natural numbers $n$ for which every group of order $n$ is nilpotent. But then for any skew brace $(A,+,\circ)$ of order $n$, since $(A,+)$ and $(A,\circ)$ are both nilpotent, the properties of being of finite multipermutation level, right-nilpotent, and annihilator nilpotent are equivalent by~\cite[The\-o\-rem~2.20]{nilpotent} and~\cite[Co\-rol\-la\-ry~2.11]{bonatto}. This shows that (3), (4), and (5) are equivalent.

\smallskip

\noindent\underline{Proof of (6) $\Leftrightarrow$ (10)}
\smallskip

The forward implication holds by considering trivial skew braces, and the backward implication is obvious.

\smallskip

\noindent\underline{Proof of (3),\, (7) $\sim$ (9) $\Rightarrow$ (10)}

\smallskip

For any odd prime $p$, the skew brace of order $p^3$ in Example \ref{ex:p^3} is not two-sided, not a bi-skew brace, not $\lambda$-homomorphic, and not of finite multipermutation level. For $p=2$, one can check using the \texttt{YangBaxter} package in \texttt{GAP} \cite{GAP} that \texttt{SmallSkewbrace(8,47)} is not two-sided, not a bi-skew brace, and not of finite multipermutation level. It is also not $\lambda$-homomorphic because the kernel of its $\lambda$-map does not contain the derived ideal. 

\smallskip

For any primes $p,q$ with $q\mid p^2-1$, the skew braces of order $pq$ (in case $q\mid p-1$) and $p^2q$ (in case $q\nmid p-1$) in Example \ref{ex:pq1}(i) are not two-sided, while those in Example \ref{ex:pq1}(ii) are not bi-skew braces and not $\lambda$-homomorphic. We also have a trivial skew brace of order $pq$ (in case $q\mid p-1$) and $p^2q$ (in case $q\nmid p-1$) that is not of finite multipermutation level because there is a non-nilpotent group of the corresponding order.

\smallskip

Therefore, if $n$ satisfies any one of (3),\, (7) $\sim$ (9), then $n$ must also satisfy the conditions given in (10).

\smallskip 

\noindent\underline{Proof of (10) $\Rightarrow$ (3),\, (7) $\sim$ (9)}

\smallskip

Now, suppose that $n$ satisfies (10), and let $(A,+,\circ)$ be any skew brace of order $n$. Then $(A,+)$ and $(A,\circ)$ are abelian, so $(A,+,\circ)$ is clearly a brace and is two-sided. Moreover, by Proposition \ref{prop:nilpotent}, we may assume that $n=p$ is a prime or $n=p^2$ is the square of a prime. In case $(A,+,\circ)$ is trivial, it clearly satisfies all of the other properties. In case $(A,+,\circ)$ is non-trivial, up to isomorphism, there are only two possibilities 
for $A$ by \cite{p3}:
\begin{enumerate}[(i)]
\item The brace $(\mathbb{Z}/p^2\mathbb{Z},+,\circ)$, where $+$ is the usual addition and
\[ a \circ b =  a + b + pab\]
for all $a,b\in \mathbb{Z}/p^2\mathbb{Z}$. Its $\lambda$-map is given by
\[ \lambda : a\in \mathbb{Z}/p^2\mathbb{Z}\longmapsto 1+pa\in (\mathbb{Z}/p^2\mathbb{Z})^\times \simeq \Aut(\mathbb{Z}/p^2\mathbb{Z}).\]
\item The brace $(\mathbb{F}_p^2,+,\circ)$, where $+$ is the usual addition and
\[  \begin{bmatrix}a_1\\a_2\end{bmatrix}
\circ \begin{bmatrix} b_1\\b_2\end{bmatrix} = \begin{bmatrix} a_1 + b_1 + a_2b_2 \\ a_2 +b_2\end{bmatrix}\]
for all $a_1,a_2,b_1,b_2\in \mathbb{F}_p$. Its $\lambda$-map is given by
\[ \lambda : \begin{bmatrix}a_1\\a_2\end{bmatrix}\in \mathbb{F}_p^2\longmapsto \begin{bmatrix}1 & a_2\\ 0 & 1 \end{bmatrix}\in \mathrm{GL}_2(\mathbb{F}_p) \simeq \Aut(\mathbb{F}_p^2).\]
\end{enumerate}
For both of the braces $A$, observe that
\[ \lambda_{a+b} = \lambda_a\lambda_b\quad\textnormal{and}\quad \lambda_{\lambda_a(b)} = \lambda_b = \lambda_a\lambda_b\lambda_a^{-1}\]
for all $a,b\in A$. It follows that $A$ is $\lambda$-homomorphic, and $A$ is a bi-skew brace by \eqref{biskewbrace}. Moreover, note  that $\mathrm{Soc}(A) = \Ker(\lambda)$ and the quotient~$A/\mathrm{Soc}(A)$ has order $p$, so  $\mathrm{Soc}_2(A)=A$ and $A$ has finite multipermutation level.
\end{proof}

\begin{theorem}\label{thm:nilpotent}
Let $n$ be a natural number, and let $p_1^{\alpha_1}\ldots p_t^{\alpha_t}$ be its prime factorisation. Then the following are equivalent:
\begin{enumerate}[$(1)$]
\item Every skew brace of order $n$ is left-nilpotent.
\item $p_i$ does not divide $p_j^k-1$ for $i\neq j$ and $1\leq k\leq\alpha_j$.
\end{enumerate}
\end{theorem}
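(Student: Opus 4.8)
The plan is to prove Theorem~\ref{thm:nilpotent} along the same lines as Theorems~\ref{thm:cyclic} and~\ref{thm:abelian}. The first point to record is that condition~(2) is precisely the classical arithmetic characterisation of the natural numbers $n$ for which every group of order $n$ is nilpotent (so-called \emph{nilpotent numbers}; see for instance \cite{thesissuper}). Granting this, the implication $(2)\Rightarrow(1)$ proceeds as follows. Let $(A,+,\circ)$ be a skew brace of order $n$. Since $(A,+)$ and $(A,\circ)$ are groups of order $n$, they are both nilpotent, so Proposition~\ref{prop:nilpotent} expresses $A$ as the direct product of its additive Sylow subgroups, each of which is an ideal of prime power order. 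Because $(A_1\times A_2)^m=A_1^m\times A_2^m$ for every $m$, the class of left-nilpotent skew braces is closed under finite direct products, and so it is enough to treat the case $n=p^\alpha$.

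The core of the argument is the lemma that \emph{every skew brace of prime power order $p^\alpha$ is left-nilpotent}, which I would prove as follows. The image $P:=\Ima(\lambda)$ is a homomorphic image of the $p$-group $(A,\circ)$, hence a $p$-subgroup of $\Aut(A,+)$; as $(A,+)$ is also a $p$-group, the semidirect product $G:=P\ltimes(A,+)$ is a finite $p$-group and is therefore nilpotent, say of class $c$. Viewing $(A,+)$ as a normal subgroup of $G$, the star product $a\ast b=\lambda_a(b)-b$ is nothing but the group commutator $[\lambda_a,b]$ computed in $G$; consequently $A^2=A\ast A\subseteq[P,(A,+)]\cap(A,+)\subseteq\gamma_2(G)\cap(A,+)$, and an easy induction using $[P,\gamma_m(G)\cap(A,+)]\subseteq\gamma_{m+1}(G)\cap(A,+)$ yields $A^{m+1}\subseteq\gamma_{m+1}(G)\cap(A,+)$ for all $m\geq1$. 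Taking $m=c$ gives $A^{c+1}=\{0\}$, as wanted.

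For the converse $(1)\Rightarrow(2)$ I would argue by contraposition. If~(2) fails, then $n$ is not a nilpotent number, so there is a non-nilpotent group $G$ of order $n$ (when $p_i\mid p_j^{k}-1$ with $k\leq\alpha_j$ one can take $G$ to be a non-trivial semidirect product $C_{p_j}^{k}\rtimes C_{p_i}$ times a suitable abelian group). Give $G$ the almost trivial skew brace structure $a\circ b=b+a$; then $\lambda_a(b)=-a+b+a$, so $a\ast b=-a+b+a-b$, and a straightforward induction identifies the left series of this skew brace with the lower central series of $G$, that is, $A^m=\gamma_m(G)$ for all $m\geq1$. Since $G$ is not nilpotent, this skew brace of order $n$ is not left-nilpotent, so~(1) fails. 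The only point requiring care in the whole proof is the bookkeeping inside the lemma --- one must check that the subgroup of $(A,+)$ generated by the elements $a\ast b$ (with $b$ ranging over $A^m$) really does land in $\gamma_{m+1}(G)\cap(A,+)$ --- but this is routine once the star product has been reinterpreted as a commutator in $P\ltimes(A,+)$.
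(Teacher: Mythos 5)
Your proof is correct and follows essentially the same route as the paper: the paper also handles $(1)\Rightarrow(2)$ via almost trivial skew braces on non-nilpotent groups, and $(2)\Rightarrow(1)$ by reducing to prime power order via Proposition~\ref{prop:nilpotent}. The only difference is that where you prove from scratch that skew braces of prime power order are left-nilpotent (by embedding the left series into the lower central series of the $p$-group $\mathrm{Im}(\lambda)\ltimes(A,+)$), the paper simply cites \cite[Proposition 4.4]{nilpotent} for this fact.
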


Note that condition (2) in Theorem \ref{thm:nilpotent} is equivalent to requiring that all groups of order $n$ are nilpotent (see \cite[Theorem 5.2.3]{thesissuper} for example, or \cite{redei}).

\begin{proof} We have (1) implies (2) because an almost trivial skew brace is left-nilpotent if and only if its underlying group is nilpotent. Conversely, suppose that $n$ satisfies (2), and let $(A,+,\circ)$ be any skew brace of order $n$. Then~$(A,+)$ and $(A,\circ)$ are nilpotent, so by Proposition \ref{prop:nilpotent}, we may assume that~$n$ is a prime power. But skew braces of prime power order are left-nilpotent by \cite[Proposition 4.4]{nilpotent}, and this proves (1).
\end{proof}

\section{Proof of Theorem C}

\begin{theorem}\label{super}
Let $n$ be a natural number, and let $n=p_1^{\alpha_1}\ldots p_t^{\alpha_t}$ be its prime factorisation. Then the following are equivalent:
\begin{enumerate}[$(1)$]
    \item Every skew brace of order $n$ is supersoluble.
    \item $
    \alpha_i\leq2$ for every $i$, and the following two conditions hold:
    \begin{enumerate}[$\bullet$]
    \item If $\alpha_j=2$, then $p_i$ does not divide $p_j^2-1$ for $i\neq j$; 
    \item If $4$ divides $n$, then $p_i\equiv1\pmod{4}$ for every $i$ with $\alpha_i=2$. 
    \end{enumerate} 
\end{enumerate}
In particular, if $n$ satisfies condition (2), then every sb-solution of cardinality~$n$ is supersoluble.
\end{theorem}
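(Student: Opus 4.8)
The plan is to reduce everything to the prime-power case via Proposition~\ref{prop:nilpotent}, then analyze the (very short) list of possibilities in each prime-power case. Since supersolubility of skew braces is preserved under direct products (every ideal of a factor is an ideal of the product, and a prime-order factor stays prime-order), and since an almost trivial skew brace is supersoluble if and only if its underlying group is supersoluble, the implication $(1)\Rightarrow(2)$ follows by exhibiting, for each forbidden $n$, a skew brace of order $n$ that is \emph{not} supersoluble. For $n$ divisible by a cube $p^3$, one uses an almost trivial skew brace on a non-supersoluble group of order $p^3$ (e.g.\ the extraspecial groups), or more simply quotes that there is a group of order $p^3$ with no normal subgroup of every admissible order once one passes to the right underlying structure; for $n$ with $\alpha_j=2$ and $p_i\mid p_j^2-1$ but $p_i\nmid p_j-1$, one uses the skew brace of order $p_i p_j^2$ constructed in Example~\ref{ex:pq2} (with the roles $p=p_j$, $q=p_i$), which was shown there to have no ideal of order $p_j$ and no ideal of order $p_j^2$, hence no chain of ideals with prime-order factors; for the case $\alpha_j=2$ and $p_i\mid p_j-1$ one again uses Example~\ref{ex:pq2} directly. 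Finally, for $4\mid n$ with some $\alpha_i=2$ and $p_i\equiv3\pmod 4$, one needs a non-supersoluble skew brace of order $4p_i^2$: here the additive group can be taken to contain $\mathbb{Z}/4\mathbb{Z}$ or $(\mathbb{Z}/2\mathbb{Z})^2$ acting on $\mathbb{F}_{p_i}^2$ in a way that produces an irreducible $2$-dimensional module over $\mathbb{F}_{p_i}$ (which exists precisely when the relevant element of $\mathrm{GL}_2(\mathbb{F}_{p_i})$ of $2$-power order has no eigenvalue in $\mathbb{F}_{p_i}$, i.e.\ when $p_i\equiv3\pmod 4$ forces $-1$ to be a non-square), again via the construction of Theorem~\ref{thm:second method}, and one checks by the ideal criterion \eqref{eq:ideal conditions} that no proper nonzero ideal of the right order exists.

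For the converse $(2)\Rightarrow(1)$, assume $n$ satisfies the arithmetic conditions and let $(A,+,\circ)$ be a skew brace of order $n$. One first checks that $(A,+)$ and $(A,\circ)$ are both nilpotent: this is where condition~(2) is used at the group level, since it is known (cf.\ the remark after Theorem~\ref{thm:nilpotent}, or R\'edei's theorem, together with the fact that $\alpha_i\le 2$ kills the cube-divisible obstructions) that the conditions in~(2) imply every group of order $n$ is nilpotent — note in particular that the clause $p_i\equiv 1\pmod 4$ when $4\mid n$ is exactly what is needed to prevent a group of order $4p_i^2$ of the form $(\mathbb{Z}/2\mathbb{Z}\times\mathbb{Z}/2\mathbb{Z})\ltimes(\mathbb{Z}/p_i\mathbb{Z})^2$ from being non-nilpotent. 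Then Proposition~\ref{prop:nilpotent} lets us write $A$ as the direct product of its additive Sylow subgroups, each of which is an ideal; since supersolubility is inherited by and assembled from direct factors, we are reduced to showing that every skew brace of order $p$ or $p^2$ (with the side condition on $p\bmod 4$ recorded for later, though at prime-power level it is automatically satisfied) is supersoluble.

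For $n=p$ every skew brace is a trivial brace of prime order, hence supersoluble. For $n=p^2$, the additive and multiplicative groups are abelian of order $p^2$, so by \cite[Theorem 3.8]{supersoluble} — every finite skew brace whose additive and multiplicative Sylow subgroups are cyclic is supersoluble — the only case not immediately covered is $(A,+)\cong(A,\circ)\cong(\mathbb{Z}/p\mathbb{Z})^2$. Here one uses the explicit classification of braces of order $p^2$ from \cite{p3} recalled in the proof of Theorem~\ref{thm:abelian}: the only nontrivial such brace is $(\mathbb{F}_p^2,+,\circ)$ with $\lambda$-map of upper-triangular unipotent shape, for which $\Ker(\lambda)=\mathrm{Soc}(A)$ has order $p$ and is an ideal, giving a chain $\{0\}\subseteq\mathrm{Soc}(A)\subseteq A$ with prime-order factors. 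Thus every skew brace of order $n$ is supersoluble, proving $(2)\Rightarrow(1)$; the ``in particular'' statement then follows at once from Theorem~\ref{thm:supersoluble solution}.

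\medskip
\noindent\textbf{Anticipated main obstacle.} The delicate point is the $4\mid n$ clause in $(2)$ and the matching non-supersoluble example of order $4p^2$ with $p\equiv3\pmod 4$: one must produce a genuine skew brace (not just a group) realizing an irreducible $\mathbb{F}_p$-module structure of dimension $2$ for a $2$-group action, verify the skew-brace axioms via Theorem~\ref{thm:second method} (checking \eqref{eq:second method}), and then rule out \emph{all} possible ideal chains with prime factors using the criterion \eqref{eq:ideal conditions} — in particular ruling out ideals of order $2$, $4$, $p$, $2p$, $p^2$, and $2p^2$. Conversely, on the $(2)\Rightarrow(1)$ side one must be careful that reducing to Sylow subgroups is legitimate, i.e.\ that the hypotheses of Proposition~\ref{prop:nilpotent} (nilpotency of both $(A,+)$ and $(A,\circ)$) really do hold under condition~(2); this is precisely the group-theoretic input for which the $p\equiv1\pmod4$ refinement is indispensable, and getting the bookkeeping of which arithmetic clause blocks which potential non-nilpotent group is the part most likely to need care.
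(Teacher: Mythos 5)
There is a genuine gap in your proof of $(2)\Rightarrow(1)$: you claim that condition $(2)$ implies every group of order $n$ is nilpotent, and then reduce to prime powers via Proposition~\ref{prop:nilpotent}. This is false. Condition $(2)$ places no restriction at all on primes $p_j$ with $\alpha_j=1$, so for instance $n=6$ (or any $n=pq$ with $q\mid p-1$) satisfies $(2)$ while admitting non-nilpotent groups; consequently $(A,+)$ and $(A,\circ)$ need not be nilpotent and Proposition~\ref{prop:nilpotent} does not apply. The paper's route is different and necessary here: condition $(2)$ guarantees only that every group of order $n$ is \emph{supersoluble}, so $(A,+)$ has a normal (hence characteristic) Sylow $p$-subgroup $P$ for the largest prime $p\mid n$; being characteristic, $P$ is a left-ideal, hence a Sylow $p$-subgroup of $(A,\circ)$, which is again normal, so $P$ is an ideal. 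One then inducts on the number of prime divisors, quotienting by $P$ when $|P|=p$, and obtaining a genuine direct-product splitting $A=P\times H$ only in the case $|P|=p^2$ (where condition $(2)$ forces the Hall $p'$-subgroup to act trivially). Your prime-power analysis at the end ($n=p$ and $n=p^2$ via the classification in \cite{p3} and the chain $\{0\}\subseteq\mathrm{Soc}(A)\subseteq A$) is fine and matches the paper's base case, but it is only reached after this non-nilpotent reduction, which your argument does not supply.

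Your $(1)\Rightarrow(2)$ also has two concrete failures. First, to rule out $p^3\mid n$ you propose an almost trivial skew brace on a non-supersoluble group of order $p^3$; no such group exists, since every finite $p$-group is nilpotent and hence supersoluble. The paper instead uses \cite[Theorem 3.7]{supersoluble} (supersoluble skew braces of prime-power order are annihilator nilpotent) together with Example~\ref{ex:p^3}, whose skew brace of order $p^3$ has trivial socle and so cannot be annihilator nilpotent. Second, for $\alpha_j=2$ and $p_i\mid p_j^2-1$ with $p_i\nmid p_j-1$ you invoke Example~\ref{ex:pq2}, but that construction explicitly requires $q\mid p-1$ and is unavailable when $p_i\mid p_j+1$; the paper handles that case with a trivial skew brace on a Frobenius group of order $p_j^2p_i$. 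Likewise, for $4\mid n$ with $p\equiv3\pmod4$ no skew-brace construction via Theorem~\ref{thm:second method} is needed: a trivial skew brace on $\mathbb{F}_p^2\rtimes\mathbb{Z}/4\mathbb{Z}$, with the generator acting by $\left[\begin{smallmatrix}0&-1\\1&0\end{smallmatrix}\right]$, already works because $-1$ is a non-residue mod $p$, so the action is irreducible and the group has no normal subgroup of order $p$.
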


 Note that condition (2) is sufficient to guarantee that all groups of order~$n$ are supersoluble (see \cite[Theorem 7.3.1]{thesissuper} for example).

\begin{proof}The last sentence follows from Theorem \ref{thm:supersoluble solution}.

\smallskip

\noindent\underline{Proof (1) $\Rightarrow $ (2)}

\smallskip

By \cite[Theorem 3.7]{supersoluble}, every finite supersoluble skew brace of prime power order is annihilator nilpotent, and so $n$ must be cube-free by Theorem~\ref{thm:abelian}.

\smallskip

For any primes $p,q$ with $q\mid p-1$, the skew brace of order $p^2q$ in Example~\ref{ex:pq2} is not supersoluble. For any primes $p,q$ with $q\mid p+1$, there is a trivial skew brace of order $p^2q$ that is not supersoluble because there is a Frobenius group of order $p^2q$, which is clearly not supersoluble. 

\smallskip

For any prime $p$ with $p\equiv 3\pmod{4}$, similarly there is a group and hence a trivial skew brace of order $4p^2$ that is not supersoluble. Indeed, we have the semidirect product $\mathbb{F}_p^2\rtimes \mathbb{Z}/4\mathbb{Z}$, where the generator of $\mathbb{Z}/4\mathbb{Z}$ acts on $\mathbb{F}_p^2$ via the matrix $\left[\begin{smallmatrix} 0 & -1 \\ 1 & 0 \end{smallmatrix}\right]$. Since $p\equiv 3\pmod{4}$ implies that $-1$ is not a quadratic residue mod $p$, this action is irreducible. It follows that $\mathbb{F}_p^2\rtimes\mathbb{Z}/4\mathbb{Z}$ has no normal subgroup of order $p$ and so is not supersoluble.

\smallskip

We conclude that $n$ must satisfy the conditions stated in (2).

\smallskip

\noindent\underline{Proof (2) $\Rightarrow $ (1)}

\smallskip

We use induction on the number $t$ of prime divisors of $n$. The case $t=1$ is clear. Indeed, braces of prime or prime-square order were classified in \cite{p3}, and up to isomorphism, there are only two non-trivial ones (see the proof of~Theorem \ref{thm:abelian}). For both of the possibilities, we have the series of ideals
\[ \{0\} \leq \mathrm{Soc}(A)\leq A\]
for which the consecutive factors have prime order, so they are supersoluble. Suppose then that $t\geq 2$ and let $p$ denote the largest prime divisor of $n$.

\smallskip

First, let $G$ be any group of order $n$. Since $G$ is necessarily supersoluble, it has a normal Sylow $p$-subgroup $P$ (see \cite[(5.4.8)]{Robinson}). Since $G$ is also soluble, it has a Hall $p'$-subgroup $H$ (which is unique up to conjugation) by a famous theorem of Hall (see \cite[(9.1.7)]{Robinson}). Clearly we have $G = P\rtimes H$. Moreover, note that when $|P| = p^2$, condition~(2) implies that the conjugation action of $H$ on $P$ must be trivial, meaning that $G = P\times H$ is in fact a direct product. 

\smallskip

Now, let $(A,+,\circ)$ be any skew brace of order $n$. The above implies that $(A,+)$ has a normal (and hence characteristic) Sylow $p$-subgroup $P$. This means that $P$ is a left-ideal of $A$ and in particular a subgroup of $(A,\circ)$. But then $P$ is also a Sylow $p$-subgroup of $(A,\circ)$, which is again normal by the previous paragraph. Thus, $P$ is an ideal of $A$. 
\begin{enumerate}[$\bullet$]
\item If $|P|=p$, then by induction $A/P$ is supersoluble, and we see that $A$ is also supersoluble.
\item If $|P|=p^2$, then $(A,+)$ also has a normal (and hence characteristic) Hall~\hbox{$p'$-sub}\-group $H$, and the same argument as above shows that $H$ is in fact an ideal of $A$. Therefore, we have a direct product decomposition
\[ (A,+,\circ) = (P,+,\circ) \times (H,+,\circ)\]
of the skew brace $A$ via ideals $P$ and $H$. By induction, we know that $P$ and $H$ are both supersoluble, whence $A$ is also supersoluble.
\end{enumerate}
This completes the proof.
\end{proof}

\medskip

We leave the following as an open problem. The issue here seems to be that very little is known about simple skew braces.

\begin{question*}Characterise the natural numbers $n$ for which every skew brace of order $n$ is soluble in the sense of \cite{ballester}.
\end{question*}

\section*{Acknowledgements}

\noindent This research is supported by JSPS KAKENHI Grant Number 24K16891, and has mostly been carried out during the visiting period of the first and second authors at the Department of Mathematics of Ochanomizu University, to which Ferrara and Trombetti would like to express their gratitude for the support and hospitality. Moreover, the first and second authors are members of the non-profit association ‘‘AGTA --- Advances in Group Theory and Applications’’ (www.advgrouptheory.com), and are supported by GNSAGA (INdAM). Funded by the European Union - Next Generation EU, Missione 4 Componente 1 CUP B53D23009410006, PRIN 2022- 2022PSTWLB - Group Theory and Applications.


\bigskip\bigskip

\end{document}